 \newtheorem{thm}{Theorem}[section]
 \newtheorem{cor}[thm]{Corollary}
 \newtheorem{lem}[thm]{Lemma}
 \newtheorem{prop}[thm]{Proposition}
 \theoremstyle{definition}
 \newtheorem{rem}{Remark}
 \DeclareMathAlphabet{\mathsfsl}{OT1}{cmss}{m}{sl}
 \newcommand{\Rnum}{\mathbb{R}}
 \newcommand{\Znum}{\mathbb{Z}}
 \newcommand{\mi}{\mathrm{i}}
  \newcommand{\diag}{\mathrm{diag}}
 \newcommand{\tensor}[1]{\mathsf{#1}}
 \newcommand{\abs}[1]{\left\vert#1\right\vert}
 \newcommand{\set}[1]{\left\{#1\right\}}
\title{On the Imbedding Problem for Three-state Time Homogeneous Markov Chains with Coinciding Negative Eigenvalues}
\author{\rm\small
\noindent CHEN Yong,\quad CHEN Jianmin\\
\noindent \footnotesize School of Mathematics and Computing Science, Hunan
University of Science and Technology,\\
\noindent \footnotesize Xiangtan, Hunan, 411201,
P.R.China. chenyong77@gmail.com}
\date{}
\begin{document}
\maketitle
\maketitle \noindent {\bf Abstract } \\
For an indecomposable $3\times 3$ stochastic matrix (i.e., 1-step transition probability matrix) with coinciding negative eigenvalues, a new necessary and sufficient condition of the imbedding problem for time homogeneous Markov chains is shown by means of an alternate parameterization of the transition rate matrix (i.e., intensity matrix, infinitesimal generator), which avoids calculating matrix logarithm or matrix square root.
In addition, an implicit description of the imbedding problem for the $3\times 3$ stochastic matrix in \cite{jhson} is pointed out.

\noindent { \bf keywords: } imbedding problem; three-state time homogeneous Markov chains; negative eigenvalues\\
 \noindent { \bf MSC numbers: } 60J10; 60J27; 60J20

\maketitle
\section{Introduction}
The imbedding problem for finite Markov chains has a long history and was first posed by Elfving \cite{Elfv}, which has applications to population movements
in social science \cite{Sil}, credit ratings in mathematical finance \cite{irw}, and statistical inference for Markov processes \cite{BS,Met}. For a review of the imbedding problem, the reader can refer to \cite{cart}.

According to Kingman \cite{irw,King}, the imbedding problem is completely solved for the case of $2\times 2$ matrices by D. G. Kendall, who proved
that a $2\times 2$ transition probability matrix is compatible with a continuous Markov process if and only if
the sum of the two diagonal entries is larger than $1$.

The explicit description of the imbedding problem for the $3\times 3$ stochastic matrix with distinct eigenvalues
or with coinciding positive eigenvalues is shown by Johansen in \cite{jhson}.
When the common eigenvalue is negative, P. Carette  provides several necessary and sufficient conditions to characterize the imbedded stochastic matrix \cite[Theorem 3.3, Theorem 3.6]{cart}. But in contrast to the above conclusions of Kingman or Johansen, it is not clear-cut.

Let $\tensor{I}$ be the identity matrix. Let $\tensor{P},\,\tensor{P}^{\infty}$ be the $3\times 3$ stochastic matrix and its limiting probability matrix respectively. Theorem 3.3 ( resp. Theorem 3.6,) of \cite{cart} needs to calculate square root of the matrix $\tensor{P}^{\infty}-\tensor{I}$ (resp. $\tensor{P}$), and then to test whether each of the off-diagonal elements satisfies an inequality. But the matrix square root is many-valued, just like the matrix logarithm \cite[p22]{Sil}.

In the present paper, a new necessary and sufficient condition is shown, which overcomes the difficulty of uncountably many versions of logarithm or square root (Theorem~\ref{main}). We chiefly rely on an alternate parameterization of the transition rate matrix (Eq.(\ref{parameter})) for the proof. At the same time, for a fixed $\tensor{P}^{\infty}$, the exact lower bound of the eigenvalue of $\tensor{P}$ that makes
$\tensor{P}$ embeddable is given (Remark~\ref{rem})\footnote{The existence of the lower bound is shown in \cite[Theorem~3.7]{cart}}.

In the more general context of time-inhomogeneous Markov chains, the imbedding problem is dealt with by some authors \cite{fryS,fry1,fry2,fug,jhr}. But we only focus on the time-homogeneous Markov chains here.
\section{The imbedding problem for 3-order transition Matrix with coinciding negative eigenvalues}
The transition matrix $\tensor{P}$ is called embeddable if there is a transition rate matrix $\tensor{Q}$ for
which $\tensor{P}=e^{\tensor{Q}}$.

Let $\tensor{P}=(p_{ij})$ be a $3\times 3$ transition probability matrix. Suppose $\tensor{P}$ is indecomposable, i.e., its state space does not contain two disjoint closed sets\cite[P17]{chung}. Let the
unique stationary probability distribution be $\mu'=(\mu_1,\mu_2,\mu_3)$  with $\mu_1+\mu_2+\mu_3=1$. Let $\vec{e}=(1,1,1)'$

\begin{lem}\label{lem0}
  Suppose $\tensor{P}$ is indecomposable. If $\tensor{Q}$ is a transition rate matrix such that $\tensor{P}=e^{h\tensor{Q}}$, $h>0$, then $\tensor{Q}$ has $\mu'\tensor{Q}=0$.
\end{lem}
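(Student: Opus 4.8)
The plan is to exploit the relation between the stationary distribution of $\tensor{P}$ and the structure of $\tensor{Q}$. First I would recall that since $\tensor{P}$ is indecomposable (hence irreducible on its single recurrent class after possibly discarding transient states — but here indecomposability of a $3\times3$ stochastic matrix gives a unique stationary $\mu'$), the equation $\mu'\tensor{P}=\mu'$ holds together with $\mu'\vec e=1$. The key observation is that $\tensor{P}=e^{h\tensor Q}=\sum_{k\ge 0}\frac{(h\tensor Q)^k}{k!}$, so $\tensor Q$ and $\tensor P$ commute and share invariant subspaces; in particular any left eigenvector of $\tensor P$ for eigenvalue $1$ should be forced to be a left eigenvector of $\tensor Q$ for eigenvalue $0$. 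The cleanest way to see this is spectral: write $\mu'$ in terms of the spectral projection of $\tensor P$ onto its eigenvalue-$1$ eigenspace.

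Concretely, the main steps I would carry out are: (1) Note $\tensor Q\vec e=0$ because $\tensor Q$ is a rate matrix (row sums zero), so $1=e^{h\cdot 0}$ is an eigenvalue of $\tensor P=e^{h\tensor Q}$ with right eigenvector $\vec e$. (2) Argue that the algebraic multiplicity of the eigenvalue $1$ of $\tensor P$ is exactly $1$: indecomposability of the stochastic matrix $\tensor P$ forces a unique stationary distribution and no Jordan block at $1$ (this is where I would invoke that $\tensor P$ has a one-dimensional eigenvalue-$1$ eigenspace both on the left and the right, with no nilpotent part). (3) Correspondingly, the eigenvalue $0$ of $\tensor Q$ must also be algebraically simple: if $\tensor Q$ had a larger generalized eigenspace at $0$, then $e^{h\tensor Q}$ would have a generalized eigenspace at $1$ of the same dimension, contradicting (2). (4) Hence the left null space of $\tensor Q$ is one-dimensional; it contains some vector, and since $\mu'$ is (up to scaling) the unique left eigenvector of $\tensor P$ at $1$, and every left $0$-eigenvector of $\tensor Q$ is a left $1$-eigenvector of $\tensor P$, we get that this null vector is proportional to $\mu'$. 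Normalizing, $\mu'\tensor Q=0$.

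An alternative, more computational route avoids spectral multiplicity bookkeeping: since $\tensor P=e^{h\tensor Q}$ we have $\tensor P-\tensor I=h\tensor Q + \tfrac{(h\tensor Q)^2}{2!}+\cdots = h\tensor Q\,R$ where $R=\sum_{k\ge0}\frac{h^k\tensor Q^k}{(k+1)!}$ commutes with $\tensor Q$. Then $\mu'(\tensor P-\tensor I)=0$ gives $\mu'\tensor Q R=0$. If one shows $R$ is invertible — equivalently that $h\tensor Q$ has no eigenvalue equal to a nonzero root of $\frac{e^z-1}{z}$ — one concludes $\mu'\tensor Q=0$. But establishing invertibility of $R$ in general is delicate, so I would favor the spectral argument of the previous paragraph.

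The main obstacle I anticipate is step (2)–(3): making rigorous that indecomposability of $\tensor P$ rules out a nontrivial Jordan structure at the eigenvalue $1$, and transferring this to $\tensor Q$ at $0$. The transfer direction is standard ($e^{h\tensor Q}$ has the same generalized eigenspaces as $\tensor Q$, with eigenvalues exponentiated), but one must be careful that distinct eigenvalues $\lambda\ne\lambda'$ of $\tensor Q$ could in principle satisfy $e^{h\lambda}=e^{h\lambda'}=1$ and thereby inflate the eigenvalue-$1$ eigenspace of $\tensor P$ beyond that of the $0$-eigenspace of $\tensor Q$; ruling that out again uses that $\tensor P$'s eigenvalue $1$ is simple, so at most one eigenvalue of $\tensor Q$ can map to it, and that eigenvalue's eigenspace is then one-dimensional and equals $\ker\tensor Q$ acting on the left. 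Once that is pinned down, the identification of the left null vector with $\mu'$ is immediate.
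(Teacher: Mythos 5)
Your argument is correct and, at its core, is the same as the paper's: a left null vector of $\tensor{Q}$ is carried by the series $e^{h\tensor{Q}}=\sum_n \frac{h^n}{n!}\tensor{Q}^n$ to a left fixed vector of $\tensor{P}$, and indecomposability of $\tensor{P}$ forces it to be proportional to $\mu'$. The Jordan/multiplicity bookkeeping in your steps (2)--(3) is superfluous: existence of a left null vector already follows from $\tensor{Q}\vec{e}=0$ (so $\det\tensor{Q}=0$), and one-dimensionality of the left eigenspace of $\tensor{P}$ at $1$ finishes the identification, which is exactly the paper's one-line proof.
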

\begin{proof}
If a distribution $\nu$ has $\nu'\tensor{Q}=0$ (i.e., the left eigenvector with eigenvalue $0$), then $\tensor{P}=e^{h\tensor{Q}}=\sum^{\infty}_{n=0}\frac{1}{n!}h^n\tensor{Q}^n$ implies that $\nu'\tensor{P}=\nu'$.
Since $\tensor{P}$ is indecomposable, one has that $\nu=\mu$.
\end{proof}
\begin{lem}\label{lem1}
Suppose the transition matrix $\tensor{P}$ is embeddable with eigenvalues $\set{1,\,\lambda,\,\lambda}$,
$\lambda<0$. Then $\tensor{P}$ is diagonalizable and satisfies
\begin{equation}\label{PP}
  \tensor{P}=\tensor{P}^{\infty}+\lambda(\tensor{I}-\tensor{P}^{\infty}),
\end{equation}
where $\tensor{P}^{\infty}=\vec{e}\mu'$ is the limiting probability matrix, $\tensor{I}$ is the identity matrix.\footnote{This conclusion is asserted in
\cite{jhson}.}
\end{lem}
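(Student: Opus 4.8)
The plan is to let embeddability do the real work — it forces $\tensor{P}$ to be diagonalizable — after which the formula (\ref{PP}) is pure linear algebra.

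\emph{Diagonalizability.} Since $\tensor{P}$ is embeddable, write $\tensor{P}=e^{\tensor{Q}}$ with $\tensor{Q}$ a (real) transition rate matrix, and list the eigenvalues $z_{1},z_{2},z_{3}$ of $\tensor{Q}$ with multiplicity, so that $e^{z_{1}},e^{z_{2}},e^{z_{3}}$ are the eigenvalues of $\tensor{P}$. Because $\tensor{Q}\vec{e}=0$ (and, by Lemma~\ref{lem0}, $\mu'\tensor{Q}=0$), one of the $z_{j}$ equals $0$; since $1$ has algebraic multiplicity one in $\tensor{P}$, no other $z_{j}$ can satisfy $e^{z_{j}}=1$, so in fact $0$ is a simple eigenvalue of $\tensor{Q}$. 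Say $z_{1}=0$; then $e^{z_{2}}=e^{z_{3}}=\lambda<0$. As no real number has a negative exponential, $z_{2}$ and $z_{3}$ are non-real, and because $\tensor{Q}$ is real they form a conjugate pair, $z_{3}=\overline{z_{2}}\ne z_{2}$. Thus $\tensor{Q}$ has three distinct eigenvalues $0,z_{2},\overline{z_{2}}$, hence is diagonalizable over $\Cnum$, and therefore so is $\tensor{P}=e^{\tensor{Q}}$.

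\emph{The formula.} Now $\tensor{P}$ is diagonalizable with spectrum $\{1,\lambda,\lambda\}$, $\lambda\ne1$, so it has a spectral resolution $\tensor{P}=\tensor{E}_{1}+\lambda\tensor{E}_{\lambda}$ with $\tensor{E}_{1}+\tensor{E}_{\lambda}=\tensor{I}$, $\tensor{E}_{1}\tensor{E}_{\lambda}=\tensor{E}_{\lambda}\tensor{E}_{1}=0$ and $\tensor{E}_{1}^{2}=\tensor{E}_{1}$. The projection $\tensor{E}_{1}$ onto the simple eigenvalue-$1$ line has rank one and fixes $\vec{e}$ (since $\tensor{P}\vec{e}=\vec{e}$), so $\tensor{E}_{1}=\vec{e}\,w'$ for some row vector $w'$ with $w'\vec{e}=1$. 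From $\tensor{E}_{1}\tensor{P}=\tensor{E}_{1}(\tensor{E}_{1}+\lambda\tensor{E}_{\lambda})=\tensor{E}_{1}$ we get $\vec{e}\,(w'\tensor{P})=\vec{e}\,w'$, whence $w'\tensor{P}=w'$; by uniqueness of the stationary distribution, $w'=\mu'$, i.e. $\tensor{E}_{1}=\vec{e}\mu'=\tensor{P}^{\infty}$. Then $\tensor{E}_{\lambda}=\tensor{I}-\tensor{P}^{\infty}$ and $\tensor{P}=\tensor{P}^{\infty}+\lambda(\tensor{I}-\tensor{P}^{\infty})$, which is (\ref{PP}). (Equivalently one checks directly that $\tensor{P}$ and $\tensor{P}^{\infty}+\lambda(\tensor{I}-\tensor{P}^{\infty})$ both fix $\vec{e}$ and both act as multiplication by $\lambda$ on the hyperplane $\{v:\mu'v=0\}$ — for $\tensor{P}$, apply $\mu'$ to $\tensor{P}v=\lambda v$ and use $\mu'\tensor{P}=\mu'$ and $\lambda\ne1$; for the other matrix use $\tensor{P}^{\infty}v=\vec{e}(\mu'v)$ and $\mu'\vec{e}=1$ — and that $\Rnum^{3}=\mathrm{span}\{\vec{e}\}\oplus\{v:\mu'v=0\}$.)

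The step carrying all the content is the diagonalizability argument, and that is precisely where embeddability and $\lambda<0$ are needed: a non-embeddable $\tensor{P}$ with spectrum $\{1,\lambda,\lambda\}$ could have a $2\times2$ Jordan block at $\lambda$ and then fail (\ref{PP}). I expect the only delicate point to be insisting that $\tensor{Q}$ is a \emph{real} matrix, so that its two non-real eigenvalues form a conjugate pair and are automatically distinct from one another and from $0$; everything downstream of diagonalizability is routine.
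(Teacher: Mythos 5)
Your proof is correct, and its essential engine is the same as the paper's: embeddability forces the generator $\tensor{Q}$ to have a simple zero eigenvalue plus a non-real conjugate pair, hence $\tensor{Q}$ (and so $\tensor{P}=e^{\tensor{Q}}$) is diagonalizable; your version of this step is in fact more carefully argued than the paper's one-line assertion. Where you diverge is in deriving (\ref{PP}) itself: you use the spectral resolution $\tensor{P}=\tensor{E}_{1}+\lambda\tensor{E}_{\lambda}$ and identify the rank-one projector $\tensor{E}_{1}$ with $\vec{e}\mu'$ via $\tensor{P}\vec{e}=\vec{e}$ and uniqueness (equivalently, simplicity of the eigenvalue $1$) of the stationary left eigenvector, whereas the paper argues concretely that $\lambda\tensor{I}-\tensor{P}$ has rank one with all three rows equal (the rows are proportional and have the common row sum $\lambda-1\neq 0$), which yields the explicit parameterized form (\ref{mat}) with $1-\lambda=x+y+z$ and reads off $\mu'=(z,\,x,\,y)/(x+y+z)$. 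Your route is coordinate-free and makes transparent exactly where the hypotheses (simplicity of the eigenvalue $1$, $\lambda\neq 1$, uniqueness of $\mu$) are used; the paper's route buys an explicit description of the entries of $\tensor{P}$ and of the stationary distribution in terms of them. Both are complete; no gap.
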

\begin{proof}
If $\tensor{Q}$ is a transition rate matrix such that $\tensor{P}=e^{\tensor{Q}}$, then $\tensor{Q}$ has a pair of
conjugate complex eigenvalues and is diagonalizable. Since $\tensor{P}=e^{\tensor{Q}}$, $\tensor{P}$ is diagonalizable too.

The eigenvalues of $\tensor{P}$ are $\set{1,\,\lambda,\,\lambda}$, thus the rank of the matrix $\lambda\tensor{I}-\tensor{P}$ is $1$.
Since $(\lambda\tensor{I}-\tensor{P})\vec{e}=(\lambda-1)\vec{e}$, the three rows of $\lambda\tensor{I}-\tensor{P}$ are equal.
Then
\begin{equation}\label{mat}
\tensor{P}  =
      \left[ \begin{array}{lll}
        1-(x+y)&x&y\\
           z&1-(y+z)&y\\
           z&x&1-(z+x)
       \end{array}
\right],
\end{equation}
and $1-\lambda=x+y+z$. Note that $\mu'=(z,\,x,\,y)/(x+y+z)$, this ends the proof.
\end{proof}
\begin{cor}\label{cor2}
  Suppose $\tensor{P}$ satisfies the condition of Lemma~\ref{lem1}, then the
stationary probability distribution is positive and all elements of $\,\tensor{P}$ are positive.
\end{cor}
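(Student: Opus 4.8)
The plan is to lean on a single consequence of embeddability already isolated in the proof of Lemma~\ref{lem1}: if $\tensor{P}=e^{\tensor{Q}}$ for a rate matrix $\tensor{Q}$, then $\tensor{Q}$ has a conjugate pair of non-real eigenvalues, since $e^{\nu}=\lambda<0$ has no real solution $\nu$. I claim this rigidity of the spectrum already forces enough connectivity of $\tensor{Q}$ to read off every sign we need.

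The key step is to show that such a $\tensor{Q}$ must be irreducible (its transition graph is strongly connected), which I would prove contrapositively. If $\tensor{Q}$ is reducible, pick states with $j$ not reachable from $i$; the set $C$ of states reachable from $i$ is then closed (no edge leaves it) and satisfies $1\le|C|\le2$. If $|C|=1$, say $C=\{k\}$, then the $k$-th row of $\tensor{Q}$ is zero, and expanding $\det(tI-\tensor{Q})$ along that row exhibits the spectrum of $\tensor{Q}$ as $\{0\}$ together with the spectrum of a $2\times2$ real matrix whose two off-diagonal entries are transition rates, hence $\ge0$. If $|C|=2$, the remaining state $k$ contributes a $1\times1$ block to $\tensor{Q}$ (all transitions out of $C$ vanish), so $\det(tI-\tensor{Q})$ factors as a real linear factor times the characteristic polynomial of a $2\times2$ matrix with non-negative off-diagonal entries. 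But a $2\times2$ matrix with diagonal entries $\alpha,\delta$ and off-diagonal entries $\beta,\gamma\ge0$ has discriminant $(\alpha-\delta)^2+4\beta\gamma\ge0$, hence real eigenvalues; so a reducible $\tensor{Q}$ would have a purely real spectrum, contradicting the non-real pair. Therefore $\tensor{Q}$ is irreducible.

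Given irreducibility, the conclusion follows from standard facts about finite continuous-time Markov chains: an irreducible rate matrix has a strictly positive stationary distribution, and its transition semigroup $e^{t\tensor{Q}}$ has all entries strictly positive for every $t>0$. By Lemma~\ref{lem0} the stationary distribution of $\tensor{Q}$ is $\mu$, so $\mu_1,\mu_2,\mu_3>0$; and taking $t=1$ shows that all entries of $\tensor{P}=e^{\tensor{Q}}$ are positive. I expect the only genuine work to be the spectral dichotomy of the second paragraph — verifying that reducibility of a $3\times3$ rate matrix collapses its spectrum to the real line. (A more computational route sidesteps irreducibility: by Lemma~\ref{lem1}, $p_{ij}=(1-\lambda)\mu_j$ for $i\ne j$ and $p_{ii}=\lambda+(1-\lambda)\mu_i$, so it suffices to exclude $\mu_j=0$ for some $j$ — which would make column $j$ of $\tensor{Q}$ vanish off the diagonal and again force a real spectrum — and $p_{ii}=0$ for some $i$, which is impossible because $(e^{t\tensor{Q}})_{ii}\ge e^{tq_{ii}}>0$ for every rate matrix $\tensor{Q}$.) Either way the crux is the elementary remark that a $2\times2$ matrix with non-negative off-diagonal part cannot have a non-real eigenvalue.
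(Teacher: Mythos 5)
Your argument is correct, but it is not the paper's argument. The paper's proof is essentially two lines of sign bookkeeping on Eq.~(\ref{PP}): since $\tensor{P}=\lambda\tensor{I}+(1-\lambda)\tensor{P}^{\infty}$ is stochastic, its diagonal entries give $\lambda+(1-\lambda)\mu_i\geqslant 0$, hence $\mu_i\geqslant\frac{-\lambda}{1-\lambda}>0$ immediately; the off-diagonal entries are then $p_{ij}=(1-\lambda)\mu_j>0$; and strict positivity of the diagonal entries is quoted from Goodman's theorem, $p_{ii}\geqslant\det\tensor{P}>0$ for embeddable matrices. You instead prove that any generator $\tensor{Q}$ with $e^{\tensor{Q}}=\tensor{P}$ must be irreducible --- because a reducible $3\times 3$ rate matrix is block triangular with a $2\times 2$ block whose non-negative off-diagonal entries force a real spectrum, while $\lambda<0$ forces a non-real conjugate pair --- and then invoke the standard facts that an irreducible generator has a strictly positive stationary distribution (identified with $\mu$ via Lemma~\ref{lem0}) and strictly positive transition probabilities $(e^{t\tensor{Q}})_{ij}>0$ for all $t>0$. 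Each route buys something: the paper's is shorter and exploits the explicit form of $\tensor{P}$, but leans on an external citation for the diagonal entries; yours is self-contained (the elementary bound $(e^{t\tensor{Q}})_{ii}\geqslant e^{tq_{ii}}>0$ replaces Goodman) and delivers the stronger structural fact that every generator of such a $\tensor{P}$ is irreducible, at the price of the spectral dichotomy you rightly identify as the real work. One small imprecision in your parenthetical alternative: $\mu_j=0$ together with $\mu'\tensor{Q}=0$ only forces $q_{ij}=0$ for those rows $i$ with $\mu_i>0$, so if two components of $\mu$ vanished the $j$-th column need not vanish off the diagonal; the real-spectrum contradiction still goes through by the same block-triangular reasoning (a zero row of $\tensor{Q}$, or a closed two-state set), but as stated that sentence overclaims slightly --- your main argument is unaffected.
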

\begin{proof}
  Since $\tensor{P}=\lambda\tensor{I}+(1-\lambda)\tensor{P}^{\infty}$, one has that $\lambda+(1-\lambda)\mu_i\geqslant 0,\,i=1,2,3$. Then
  $\mu_i\geqslant \frac{-\lambda}{1-\lambda}>0$, and the off-diagonal elements satisfy $p_{ij}=(1-\lambda)\mu_j>0,\,i\neq j$.
In addition, it was shown by Goodman that each of the diagonal elements of an
embeddable matrix dominates the determinant, and that this determinant is
positive: $p_{ii}\geqslant \det \tensor{P}> 0$ \cite{jhr,gdm}.
\end{proof}
The conclusions of Lemma~\ref{lem1} and Corollary~\ref{cor2} also appear in \cite{cart}. By Lemma~\ref{lem1}, $\tensor{P}$ is completely determined by its stationary distribution and the coinciding eigenvalues.

\begin{prop}\label{prop3}
Suppose that $\tensor{P}$ satisfies Eq.(\ref{PP}). $\tensor{P}$ can be imbedded if and only if there exists a transition
rate matrix $\tensor{Q}$ such that it has $\mu'\tensor{Q}=0$ and eigenvalues $\theta$ and $\bar{\theta}$ and it holds that $e^{\theta h}=\lambda$ for some $h\in \Rnum^+$.
\end{prop}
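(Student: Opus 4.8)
The plan is to derive both implications from the spectral structure of a $3\times3$ transition rate matrix, leaning on Lemma~\ref{lem0} together with the elementary spectral observations already used in the proof of Lemma~\ref{lem1}; no matrix logarithm or square root will be needed.

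For necessity I would start from $\tensor{P}=e^{\tensor{Q}}$ with $\tensor{Q}$ a transition rate matrix, invoke Lemma~\ref{lem0} to get $\mu'\tensor{Q}=0$, and note that $\tensor{Q}$ (real, with $\tensor{Q}\vec{e}=0$) has spectrum $\{0,\theta,\bar\theta\}$, so that $e^{\tensor{Q}}$ has eigenvalues $1,e^{\theta},e^{\bar\theta}$. Matching $\{1,e^{\theta},e^{\bar\theta}\}$ with the multiset $\{1,\lambda,\lambda\}$ of eigenvalues of $\tensor{P}$ (recall $\lambda\neq1$), together with $e^{\bar\theta}=\overline{e^{\theta}}$, leaves only $e^{\theta}=\lambda$; taking $h=1$ then finishes this direction.

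For sufficiency I would take the given $\tensor{Q}$ and $h>0$, observe that $e^{\theta h}=\lambda<0$ forces $\theta\notin\Rnum$, hence $0,\theta,\bar\theta$ are pairwise distinct and $\tensor{Q}$ is diagonalizable with spectral projections $E_0,E_\theta,E_{\bar\theta}$ summing to $\tensor{I}$. Since $\vec{e}$ and $\mu'$ are the right and left null vectors of $\tensor{Q}$, normalised by $\mu'\vec{e}=1$, the simple eigenvalue $0$ has projection $E_0=\vec{e}\,\mu'=\tensor{P}^{\infty}$. Noting that $h\tensor{Q}$ is again a transition rate matrix and $e^{\bar\theta h}=\overline{e^{\theta h}}=\lambda$, I would then compute
\begin{equation*}
e^{h\tensor{Q}}=E_0+e^{\theta h}E_\theta+e^{\bar\theta h}E_{\bar\theta}
=\tensor{P}^{\infty}+\lambda\bigl(E_\theta+E_{\bar\theta}\bigr)
=\tensor{P}^{\infty}+\lambda\bigl(\tensor{I}-\tensor{P}^{\infty}\bigr)=\tensor{P}
\end{equation*}
by Eq.~(\ref{PP}), so that $\tensor{P}$ is embeddable via $h\tensor{Q}$.

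I expect the only genuinely delicate point to be the identification $E_0=\tensor{P}^{\infty}$. Knowing the eigenvalues $1,\lambda,\lambda$ alone fixes $e^{h\tensor{Q}}$ only up to the choice of the rank-one eigenprojection for the eigenvalue $1$, and it is exactly the conditions $\tensor{Q}\vec{e}=0$ and $\mu'\tensor{Q}=0$ — the second of which is forced by indecomposability to single out the stationary distribution (Lemma~\ref{lem0}) — that pin that projection to $\vec{e}\,\mu'$. Once this is in hand, Eq.~(\ref{PP}) closes the comparison with $\tensor{P}$, and the rest is routine manipulation of the real spectral decomposition of a matrix with one real and two complex-conjugate eigenvalues.
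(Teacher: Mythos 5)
Your proof is correct and takes essentially the same route as the paper: necessity via Lemma~\ref{lem0} plus matching the spectrum of $e^{h\tensor{Q}}$ with $\set{1,\lambda,\lambda}$, and sufficiency by showing $e^{h\tensor{Q}}$ must have exactly the decomposition forced by Eq.(\ref{PP}). The only difference is packaging: you express the sufficiency step through spectral projections, pinning $E_0=\vec{e}\mu'$ by the left and right null vectors, while the paper reaches the same identification by diagonalizing $\tensor{P}$ and $e^{h\tensor{Q}}$ with a common eigenvector matrix $[\vec{e},f,g]$, where $f,g$ span the orthogonal complement of $\mu$ (Lemma~\ref{cor0}).
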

The proof of Proposition~\ref{prop3} is presented in Section~\ref{prf}. Here $\theta$ is a complex eigenvalue $-p+\mi q$
with $\frac{q}{p}=\frac{(2k+1)\pi}{-\log\abs{\lambda}}$, $k\in \Znum^+$.
\begin{rem}
By the Runnenberg condition in \cite{Sil,Run}, the complex eigenvalue $-p+\mi q$ of the transition
rate matrix $\tensor{Q}$ satisfies that
\begin{equation*}
  \frac{\abs{q}}{p} \leqslant\frac{1}{\sqrt{3}},
\end{equation*}
which the reader can also refer to \cite{cy} for detail. Then
$$\abs{\lambda}\leqslant e^{-\sqrt{3} \pi}\doteq  0.0043,$$
and $\tensor{P}$ is almost equal to its limiting probability matrix by Eq.(\ref{PP}).
\end{rem}

For a transition rate matrix
\begin{equation}
\tensor{Q}= \left[
\begin{array}{lll}
     -a_2-a_3&a_2 &a_3\\
     b_1&-b_1-b_3&b_3\\
     c_1&c_2 &-c_1-c_2
\end{array}
\right ],
\end{equation}
suppose that it has $\mu'\tensor{Q}=0$, that is to say,
\begin{equation}\label{flux}
  \mu_1a_2-\mu_2b_1=\mu_2 b_3 - \mu_3 c_2 = \mu_3 c_1 - \mu_1 a_3.
\end{equation}
Let
\begin{equation}\label{gamma}
  \hspace{-9mm} \nu=\frac{\mu_1a_2-\mu_2b_1}{2},\,
\gamma=\frac{\mu_1a_2+\mu_2b_1}{2},\,
\delta=\frac{\mu_2b_3+\mu_3c_2}{2},\,
\kappa=\frac{\mu_3c_1+\mu_1a_3}{2} .
\end{equation}

We propose an alternate parameterization of the transition rate matrix as
\begin{equation}\label{parameter}
\tensor{Q}=\left[
\begin{array}{lll}
                  -\frac{\kappa+\gamma}{\mu_1}&\frac{\gamma+\nu}{\mu_1}&
                                      \frac{\kappa-\nu}{\mu_1}\\
                  \frac{\gamma-\nu}{\mu_2}&-\frac{\gamma+\delta}{\mu_2}&
                                      \frac{\delta+\nu}{\mu_2}\\
                 \frac{\kappa+\nu}{\mu_3}&\frac{\delta-\nu}{\mu_3}&
                                      -\frac{\delta+\kappa}{\mu_3}
\end{array}
\right ],
\end{equation}
where
\begin{equation}\label{vkg}
  \kappa,\,\gamma,\,\delta\geqslant 0,\quad  \kappa+\gamma,\,\gamma+\delta,\,\delta+\kappa>0, \quad\text{  and} \quad \abs{\nu}\leqslant\kappa,\,\gamma,\,\delta.
\end{equation}
The re-parameterization is one-to-one.
Then the transition
rate matrix $\tensor{Q}$ with $\mu'\tensor{Q}=0$ must satisfy
Eq.(\ref{parameter}).
\footnote{This re-parameterization of the transition rate matrix
appears in \cite{cy}, and appears in \cite{qianq,jdq,Kalpa,McM} implicitly, which is named the cycle decomposition by some authors.}

Let the eigen-equation of
$\tensor{Q}$ be $\lambda(\lambda^2 + \alpha \lambda +\beta)=0$.
Then we have that
\begin{eqnarray}
\alpha &=&\frac{\kappa+\gamma}{\mu_1}+
\frac{\gamma+\delta}{\mu_2}+\frac{\delta+\kappa}{\mu_3},\label{alpha}\\
\beta &=& \frac{ \kappa\gamma + \gamma\delta + \delta\kappa+\nu^2}{\mu_1\mu_2\mu_3 }.\label{beta}
\end{eqnarray}
and the eigenvalue is
\begin{equation}
  \theta =-\frac{\alpha}{2}+\mi \sqrt{\beta-\alpha^2/4}\,.
\end{equation}
Let the ratio
between the imaginary ($q$) and real ($-p$) parts of the
nonzero eigenvalues be
\begin{eqnarray}\label{H-func}
  H(\kappa,\gamma,\delta,\nu)&\triangleq & \frac{\abs{q}}{p} \nonumber \\
  &=&\sqrt{\frac{4\beta-\alpha^2}{\alpha^2}}
  =\sqrt{4\frac{\beta}{\alpha^2}-1}\nonumber \\
  &=&\sqrt{\frac{4}{\mu_1\mu_2\mu_3}\frac{\kappa\gamma + \gamma\delta + \delta\kappa+\nu^2}{(\frac{\kappa+\gamma}{\mu_1}+
\frac{\gamma+\delta}{\mu_2}+\frac{\delta+\kappa}{\mu_3})^2}-1}.
\end{eqnarray}
\begin{rem}
Since there may be many different transition rate matrices such that
$H(\kappa,\gamma,\delta,\nu)=\frac{(2k+1)\pi}{-\log\abs{\lambda}},\,k\in\Znum^+$,
the solution of $\tensor{P}=e^{\tensor{Q}}$ is not unique.
\end{rem}
\begin{rem}\label{prop4}
For the given $\mu'=(\mu_1,\,\mu_2,\,\mu_3)$,
if $\nu=0$ and $\kappa:\gamma:\delta=\frac{1}{\mu_2}:\frac{1}{\mu_3}:\frac{1}{\mu_1}$,
then $H(\kappa,\gamma,\delta,\nu)=0$. Note that when
\begin{equation}
\kappa,\,\gamma,\,\delta\geqslant 0,\quad  \kappa+\gamma,\,\gamma+\delta,\,\delta+\kappa>0, \quad\text{  and} \quad \abs{\nu}\leqslant\kappa,\,\gamma,\,\delta,
\end{equation}
$H(\kappa,\gamma,\delta,\nu)$ is a continuous real function of $(\kappa,\gamma,\delta,\nu)$.\footnote{It is needed that the term inside the $\sqrt{\cdot}$ of Eq.(\ref{H-func}) is positive.} Therefore, $H(\kappa,\gamma,\delta,\nu)$ takes the value $\frac{\pi}{-\log\abs{\lambda}}$ if the maximum of $H(\kappa,\gamma,\delta,\nu)$ is greater than or equal to $\frac{\pi}{-\log\abs{\lambda}}$.
\end{rem}

Therefore, an optimization problem is formulated, and the next Proposition solves it.
We say that three positive numbers $a,\,b,\,c$ satisfy the triangle inequality if $a + b > c,\,b + c > a,\,a + c >b$.
Let
\begin{equation}
  m=\min\set{\mu_1,\,\mu_2,\,\mu_3}.
\end{equation}
\begin{prop}\label{lem5}
Suppose that
\begin{equation}
  F(x_1, x_2, x_3, \mu_1, \mu_2, \mu_3)=\frac{x_1x_2+x_2x_3+x_3x_1+(\min\set{x_1,x_2, x_3})^2}{(\frac{x_1+x_2}{\mu_1}+
\frac{x_2+x_3}{\mu_2}+\frac{x_3+x_1}{\mu_3})^2},
\end{equation}
with $x_1, x_2, x_3\geqslant0$ and $x_1+x_2,\,x_2+x_3,\,x_3+x_1 >0$.
Then the maximum of $F(x_1, x_2, x_3, \mu_1, \mu_2, \mu_3)$ is
\begin{equation}\label{maximum}
  \left\{
      \begin{array}{ll}
      \frac{1}{(\frac{1}{\mu_1}+\frac{1}{\mu_2}+\frac{1}{\mu_3})^2}, &\mbox{\, when\, } \frac{1}{\mu_1},\frac{1}{\mu_2},\frac{1}{\mu_3} \mbox{\, satisfy the triangle inequality, }\\
      \frac{\mu_1\mu_2\mu_3}{4(1-m )}, &\mbox{otherwise}.
      \end{array}
\right.
\end{equation}
The maximum is attained at the points
\begin{equation}\label{maximumpoint}
  \left\{
      \begin{array}{ll}
      x_1= x_2= x_3, &\mbox{\, when\, } \frac{1}{\mu_1},\frac{1}{\mu_2},\frac{1}{\mu_3} \mbox{\,satisfy the triangle inequality, }\\
      x_1= x_2=\frac{\frac{1}{\mu_2}+\frac{1}{\mu_3}}{\frac{2}{\mu_1}-\frac{1}{\mu_2}-\frac{1}{\mu_3}} x_3, &\mbox{\, when\,  } \frac{1}{\mu_1}\geqslant\frac{1}{\mu_2}+\frac{1}{\mu_3} ,\\
      x_2= x_3=\frac{\frac{1}{\mu_3}+\frac{1}{\mu_1}}{\frac{2}{\mu_2}-\frac{1}{\mu_3}-\frac{1}{\mu_1}} x_1, &\mbox{\, when\,  } \frac{1}{\mu_2}\geqslant\frac{1}{\mu_3}+\frac{1}{\mu_1},\\
      x_3= x_1=\frac{\frac{1}{\mu_1}+\frac{1}{\mu_2}}{\frac{2}{\mu_3}-\frac{1}{\mu_1}-\frac{1}{\mu_2}} x_2, &\mbox{\, when\,  } \frac{1}{\mu_3}\geqslant\frac{1}{\mu_1}+\frac{1}{\mu_2}.
      \end{array}
\right.
\end{equation}
\end{prop}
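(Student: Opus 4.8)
The plan is to treat this as a constrained optimization problem and exploit two kinds of homogeneity. First, $F$ is homogeneous of degree $0$ in $(x_1,x_2,x_3)$, so without loss of generality I may normalize, say, $x_1+x_2+x_3=1$, or more conveniently work on the open simplex and handle the boundary separately. Second, the "$\min$" in the numerator partitions the feasible region into three closed sectors according to which of $x_1,x_2,x_3$ is smallest; on each sector $F$ is a smooth rational function, so I can maximize on each sector and then take the largest of the three values. By the evident symmetry of $F$ under simultaneously permuting $(x_1,x_2,x_3)$ and $(\mu_1,\mu_2,\mu_3)$, it suffices to do the analysis on the sector $\set{x_3\le x_1,\,x_3\le x_2}$ and then relabel.

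On that sector write $N=x_1x_2+x_2x_3+x_3x_1+x_3^2$ and $D=\bigl(\tfrac{x_1+x_2}{\mu_1}+\tfrac{x_2+x_3}{\mu_2}+\tfrac{x_3+x_1}{\mu_3}\bigr)^2$. The key step is a Lagrange/stationarity computation: at an interior critical point of $N/D$ one has $D\nabla N = N\nabla D$, which (using $D$ a perfect square) reduces to the two linear-in-gradient conditions
\begin{align*}
x_2+x_3 &= 2\mu_1 t,\\
x_1+x_3 &= 2\mu_2 t,\\
x_1+x_2+2x_3 &= 2\mu_3 t,
\end{align*}
for a common multiplier $t>0$ (I will derive the precise constants from $\partial_i N$ and $\partial_i\sqrt D$). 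Adding the first two and subtracting the third gives $x_3(\tfrac{1}{\mu_1}+\tfrac{1}{\mu_2}-\tfrac{1}{\mu_3})$ proportional to $x_3$ — more carefully, solving the $3\times3$ linear system expresses $x_1,x_2,x_3$ (up to scale) in terms of $\tfrac1{\mu_i}$, and one finds $x_1=x_2=x_3$ is forced precisely when $\tfrac1{\mu_1},\tfrac1{\mu_2},\tfrac1{\mu_3}$ can serve as the three quantities $x_2+x_3,\,x_1+x_3,\,x_1+x_2$ of an actual triple with $x_3=\min$, i.e. when they satisfy the triangle inequality; substituting $x_1=x_2=x_3$ into $F$ yields $\bigl(\tfrac1{\mu_1}+\tfrac1{\mu_2}+\tfrac1{\mu_3}\bigr)^{-2}$, the first branch of \eqref{maximum}.

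When the triangle inequality fails, say $\tfrac1{\mu_3}\ge\tfrac1{\mu_1}+\tfrac1{\mu_2}$ (equivalently $\mu_3=m$ in a suitable sense — I will check that the failing reciprocal corresponds to the smallest $\mu$), the interior critical point leaves the sector, so the maximum is attained on the boundary face $x_3=0$ of that sector (the faces $x_3=x_1$ or $x_3=x_2$ are handled by relabelling into an adjacent sector, so they cannot give a strictly larger value than the global max we are computing). On $x_3=0$ we have $F=\dfrac{x_1x_2}{(\tfrac{x_1+x_2}{\mu_1}+\tfrac{x_2}{\mu_2}+\tfrac{x_1}{\mu_3})^2}$, a two-variable homogeneous-degree-zero problem; setting $r=x_1/x_2$ and maximizing the resulting one-variable rational function gives $r=\tfrac{1/\mu_2+1/\mu_3}{2/\mu_3-1/\mu_1-1/\mu_2}$ after clearing denominators and using AM–GM (the numerator $x_1x_2$ versus the squared linear form), and the optimal value simplifies to $\dfrac{\mu_1\mu_2\mu_3}{4(1-m)}$ upon substituting $\mu_1+\mu_2+\mu_3=1$; this is the second branch, with the optimal point as in the third line of \eqref{maximumpoint}, the other two lines being the relabelled versions.

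The main obstacle I anticipate is bookkeeping rather than conceptual: one must (i) verify that the stationary point found on each sector actually lies in that sector exactly under the stated triangle condition, so that the three sector-maxima do not overlap in a way that hides a larger value; (ii) confirm the critical points are maxima and not saddles — most cleanly by checking $F\to$ a smaller value as $(x_1,x_2,x_3)$ approaches the remaining boundary pieces where a pair is zero, combined with continuity and compactness after normalization; and (iii) carry the algebraic simplification $\kappa\gamma+\gamma\delta+\delta\kappa$-type expressions through to the clean closed forms, repeatedly invoking $\mu_1+\mu_2+\mu_3=1$ to turn $1-\mu_i$ into the sum of the other two. I would organize the write-up as: reduction by homogeneity, the three-sector decomposition, the interior stationarity computation, the boundary computation, and finally assembling the cases — deferring the longest identities to a short appendix-style paragraph or simply asserting them as "a direct computation."
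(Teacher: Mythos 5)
Your overall framing (degree-zero homogeneity plus a decomposition into sectors according to which $x_i$ is smallest) is in the spirit of the paper's proof, but the two computational pillars of your argument both fail. First, the interior stationarity step: on your sector, write $N=(x_1+x_3)(x_2+x_3)$ and $S=\frac{x_1+x_2}{\mu_1}+\frac{x_2+x_3}{\mu_2}+\frac{x_3+x_1}{\mu_3}$, so $F=N/S^2$ and a critical point requires $S\,\partial_i N=2N\,\partial_i S$ for $i=1,2,3$. Since $\partial_3 N=\partial_1 N+\partial_2 N$ while $\partial_1 S+\partial_2 S-\partial_3 S=\frac{2}{\mu_1}$, these equations force $N\cdot\frac{4}{\mu_1}=0$: there are \emph{no} critical points in the relative interior of the sector, triangle inequality or not. (Your displayed system is inconsistent for the same structural reason: adding its first two equations and comparing with the third forces $\mu_3=\mu_1+\mu_2$.) Hence the claim that an interior critical point exists and equals $x_1=x_2=x_3$ exactly when $\frac{1}{\mu_1},\frac{1}{\mu_2},\frac{1}{\mu_3}$ satisfy the triangle inequality is unsupported; indeed $x_1=x_2=x_3$ lies on the kink set where two (all three) coordinates tie for the minimum, i.e.\ on the very sector boundary you set aside "by relabelling," and that dismissal is circular because every sector's maximum sits on such shared facets.

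Second, the non-triangle case is not merely incomplete but gives the wrong location and the wrong value. On the face $x_3=0$, AM--GM gives $F\le\frac{1}{4\left(\frac{1}{\mu_1}+\frac{1}{\mu_3}\right)\left(\frac{1}{\mu_1}+\frac{1}{\mu_2}\right)}=\frac{\mu_1^2\mu_2\mu_3}{4(1-\mu_2)(1-\mu_3)}$, which, when $\frac{1}{\mu_3}\ge\frac{1}{\mu_1}+\frac{1}{\mu_2}$ (so $m=\mu_3$), is smaller than $\frac{\mu_1\mu_2\mu_3}{4(1-m)}$ by the factor $\frac{\mu_1}{\mu_1+\mu_3}<1$; e.g.\ for $\mu=(0.45,0.45,0.1)$ the face maximum is about $0.0046$ while the true maximum $0.005625$ is attained at $x_3=x_1=\frac{\frac{1}{\mu_1}+\frac{1}{\mu_2}}{\frac{2}{\mu_3}-\frac{1}{\mu_1}-\frac{1}{\mu_2}}\,x_2>0$, again a point where the two smallest coordinates are tied and positive, as in (\ref{maximumpoint}), not a point with $x_3=0$. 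So in both cases the maximizer lies on the tie set of the minimum, and your route cannot reach it. The paper's proof avoids stationarity altogether: on $\set{x_1\le x_2\le x_3}$ the numerator factors as $(x_1+x_2)(x_1+x_3)$, the substitution $r=x_1+x_2,\ s=x_2+x_3,\ t=x_3+x_1$ (with $0<r\le t\le s$) plus monotonicity in $s$ reduces to $s=t$, and then $w=\sqrt{r/t}\in(0,1]$ turns the problem into minimizing $w+\frac{a}{w}$ on $(0,1]$ (Lemma~\ref{lemchen}); the triangle-inequality dichotomy is exactly whether the unconstrained minimizer $w=\sqrt{a}$ lies in $(0,1]$. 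You would need to replace your stationarity-plus-face analysis by an argument of this constrained, boundary-aware type (or genuinely maximize over the kink facets) for the proof to go through.
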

Proof of Proposition~\ref{lem5} is presented in Section~\ref{prf}.

\begin{thm}\label{main}
 Suppose that $\tensor{P}$ is a $3\times 3$ stochastic matrix
with eigenvalues $\set{1,\lambda,\lambda},\,\lambda<0,$ and that $\tensor{P}$ satisfies Eq.(\ref{PP}).
Then $\tensor{P}$ can be imbedded if and only if
\begin{equation}\label{eq2}
 \sqrt{\frac{4\mu_1\mu_2\mu_3}{(\mu_1\mu_2+\mu_1\mu_3+\mu_2\mu_3)^2}-1}\geqslant \frac{\pi}{-\log\abs{\lambda}},\mbox{\, when\, } \frac{1}{\mu_1},\frac{1}{\mu_2},\frac{1}{\mu_3} \mbox{\,satisfy the triangle inequality, }
\end{equation}
or
\begin{equation}
   \hspace{-9mm} \sqrt{ \frac{m}{1-m}}\geqslant \frac{\pi}{-\log\abs{\lambda}},\mbox{\, otherwise. }
\end{equation}
\end{thm}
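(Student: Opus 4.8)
The plan is to assemble the theorem directly from the machinery already built up in the excerpt, so that almost no new work is required beyond bookkeeping. By Proposition~\ref{prop3}, the matrix $\tensor{P}$ (satisfying Eq.(\ref{PP})) is embeddable if and only if there is a transition rate matrix $\tensor{Q}$ with $\mu'\tensor{Q}=0$ whose complex eigenvalue $\theta=-p+\mi q$ satisfies $e^{\theta h}=\lambda$ for some $h\in\Rnum^+$. Writing $\lambda=-\abs{\lambda}$, the equation $e^{\theta h}=\lambda$ forces $e^{-ph}=\abs{\lambda}$ and $qh=(2k+1)\pi$ for some $k\in\Znum^+$, hence $\abs{q}/p=(2k+1)\pi/(-\log\abs{\lambda})$. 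Therefore embeddability is equivalent to the existence of parameters $(\kappa,\gamma,\delta,\nu)$ satisfying Eq.(\ref{vkg}) with $H(\kappa,\gamma,\delta,\nu)=(2k+1)\pi/(-\log\abs{\lambda})$ for some $k\in\Znum^+$.

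Next I would reduce this to a single inequality. Since $(2k+1)\pi/(-\log\abs{\lambda})$ is smallest when $k=0$, and since by Remark~\ref{prop4} the function $H$ is continuous on the (connected) parameter domain and takes the value $0$ at the point $\nu=0$, $\kappa:\gamma:\delta=\frac{1}{\mu_2}:\frac{1}{\mu_3}:\frac{1}{\mu_1}$, the intermediate value theorem shows that $H$ attains the value $\pi/(-\log\abs{\lambda})$ as soon as $\sup H\geqslant \pi/(-\log\abs{\lambda})$; conversely if $\sup H<\pi/(-\log\abs{\lambda})$ then no admissible value $(2k+1)\pi/(-\log\abs{\lambda})$ can be hit. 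Hence $\tensor{P}$ is embeddable if and only if $\sup H\geqslant \pi/(-\log\abs{\lambda})$, i.e. $\sup H^2\geqslant \pi^2/(\log\abs{\lambda})^2$.

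It then remains to compute $\sup H^2$. From Eq.(\ref{H-func}), $H^2+1=\frac{4}{\mu_1\mu_2\mu_3}F(\kappa,\gamma,\delta,\mu_1,\mu_2,\mu_3)$ once we note that the admissible constraint $\abs{\nu}\leqslant\kappa,\gamma,\delta$ means the numerator is maximized (for fixed $\kappa,\gamma,\delta$) by taking $\nu^2=(\min\{\kappa,\gamma,\delta\})^2$, which is exactly the numerator of $F$. Applying Proposition~\ref{lem5} with $(x_1,x_2,x_3)=(\kappa,\gamma,\delta)$ gives $\sup F$ in the two cases. In the triangle-inequality case, $\sup H^2+1=\frac{4}{\mu_1\mu_2\mu_3}\cdot\frac{1}{(\frac1{\mu_1}+\frac1{\mu_2}+\frac1{\mu_3})^2}=\frac{4\mu_1\mu_2\mu_3}{(\mu_2\mu_3+\mu_1\mu_3+\mu_1\mu_2)^2}$, so the condition becomes Eq.(\ref{eq2}). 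In the other case, $\sup H^2+1=\frac{4}{\mu_1\mu_2\mu_3}\cdot\frac{\mu_1\mu_2\mu_3}{4(1-m)}=\frac{1}{1-m}$, so $\sup H^2=\frac{1}{1-m}-1=\frac{m}{1-m}$, giving the second inequality. Combining these finishes the proof.

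The only genuine subtlety — the step I expect to need the most care — is justifying the passage from "$H$ takes some value $(2k+1)\pi/(-\log\abs{\lambda})$, $k\in\Znum^+$" to "$\sup H\geqslant\pi/(-\log\abs{\lambda})$": one must check that the parameter domain in Eq.(\ref{vkg}) is connected (it is, being essentially a cone intersected with an absolute-value constraint), that $H$ really is continuous there including at points where $4\beta-\alpha^2$ could approach $0$ (it stays nonnegative, and $H=0$ exactly when $4\beta=\alpha^2$, which is fine), and that the value $0$ is indeed attained as asserted in Remark~\ref{prop4}. Granting that, the intermediate value argument is immediate and the rest is the substitution of Proposition~\ref{lem5}. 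I would also remark, as in Remark~\ref{rem}, that reading Eq.(\ref{eq2}) as a lower bound on $\abs{\lambda}$ recovers the threshold for embeddability with $\tensor{P}^{\infty}$ fixed.
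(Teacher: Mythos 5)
Your proposal is correct and follows essentially the same route as the paper: Proposition~\ref{prop3} reduces embeddability to finding a rate matrix whose eigenvalue ratio $\abs{q}/p$ equals $(2k+1)\pi/(-\log\abs{\lambda})$, the continuity/intermediate-value observation of Remark~\ref{prop4} reduces this to $\sup H\geqslant \pi/(-\log\abs{\lambda})$, and Proposition~\ref{lem5} (with $\nu=\min\set{\kappa,\gamma,\delta}$) evaluates the supremum, giving exactly the two stated inequalities; you are in fact somewhat more explicit than the paper about the necessity direction. The only slip is the parenthetical claim that $4\beta-\alpha^2$ ``stays nonnegative'' on the whole parameter domain (it does not --- rate matrices with real eigenvalues occur there), but this is harmless and at the same level of informality as the paper's own footnote: apply the intermediate value theorem to the everywhere-continuous function $4\beta/\alpha^2-1$ along a path joining the zero point of Remark~\ref{prop4} to the maximizer, and note that wherever it equals $\bigl(\pi/(-\log\abs{\lambda})\bigr)^2>0$ the eigenvalues are genuinely complex.
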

\noindent{\it Proof of Theorem \ref{main}.\,}
Clearly, if the function $H(\kappa,\gamma,\delta,\nu)$ reaches its maximum then $\nu=\min\set{\kappa,\gamma,\delta}$. From Proposition~\ref{prop3} we have to find a transition rate matrix $Q(\kappa, \gamma, \delta, \nu)$ of the form just above (\ref{parameter}), with eigenvalues $\theta(\kappa, \gamma, \delta, \nu)$ and $\bar{\theta}(\kappa, \gamma, \delta, \nu)$ for
which $e^{\theta h} = \lambda$ for some $h > 0$.
Using Proposition~\ref{lem5} we can find $(\kappa_0, \gamma_0, \delta_0, \nu_0)$, so that for the given $\mu',\,\lambda$ we
have
\begin{equation}
  H(\kappa_0, \gamma_0, \delta_0, \nu_0)= \frac{\pi}{-\log\abs{\lambda}}=H_0.
\end{equation}
The corresponding transition rate matrix $Q(\kappa_0, \gamma_0, \delta_0, \nu_0)$ has eigenvalue $\theta(\kappa_0, \gamma_0, \delta_0, \nu_0)=\theta_0$
and $\bar{\theta}_0$, where
\begin{equation}
  \theta_0=-\frac{\alpha_0}{2}+\mi \sqrt{\beta_0-\alpha_0^2/4}=\frac{\alpha_0}{2}[-1+\mi H_0]=\frac{\alpha_0}{2}[-1+\mi \frac{\pi}{-\log\abs{\lambda}}].
\end{equation}
We finally choose $h =-2 \log\abs{\lambda} /\alpha_0$ and find
\begin{eqnarray*}
   \theta_0 h &=-\frac{2\log\abs{\lambda}}{\alpha_0} \frac{\alpha_0}{2}[-1+\mi \frac{\pi}{-\log\abs{\lambda}}]\\
              &= \log\abs{\lambda}+\mi \pi
\end{eqnarray*}
which satisfies $e^{\theta_0 h} = \lambda $.
{\hfill\large{$\Box$}}
\begin{rem}
If $\mu_1+\mu_2+\mu_3=1$, and $\frac{1}{\mu_1},\,\frac{1}{\mu_2},\,\frac{1}{\mu_3}$ satisfy the triangle inequality, then it can be shown easily that
\begin{equation}
  4\mu_1\mu_2\mu_3\geqslant(\mu_1\mu_2+\mu_1\mu_3+\mu_2\mu_3)^2\geqslant 3\mu_1\mu_2\mu_3,
\end{equation}
i.e., the term inside the $\sqrt{\cdot}$ of Eq.(\ref{eq2}) is positive.
\end{rem}
\begin{rem}\label{rem}
For a fixed $\tensor{P}^{\infty}=\vec{e}\mu' $, what are the possible values of $\lambda<0$ that make the stochastic matrix $\tensor{P}$ embeddable?
In \cite[Theorem 3.7]{cart}, P. Carette puts the above question and shows that there exists $\Lambda<0$ such that $\tensor{P}$ is imbeddable
if and only if $\Lambda\leqslant \lambda <0$. As a consequence of Theorem~\ref{main},  the exact value of $\Lambda$ is that
\begin{equation}
\hspace{-4mm}  \Lambda=\left\{
      \begin{array}{ll}
      -\exp\set{-\frac{\pi}{\sqrt{b}}}, &\mbox{when\,} \frac{1}{\mu_1},\frac{1}{\mu_2},\frac{1}{\mu_3} \mbox{\,satisfy the triangle inequality,\, }\\
      -\exp\set{-\sqrt{\frac{1-m}{m}}\,\pi}, &\mbox{otherwise},
      \end{array}
\right.
\end{equation}
where $b=\frac{4\mu_1\mu_2\mu_3}{(\mu_1\mu_2+\mu_1\mu_3+\mu_2\mu_3)^2}-1,\, m=\min\set{\mu_1,\,\mu_2,\,\mu_3}$.
\end{rem}
\subsection{Proof of the propositions}\label{prf}
\begin{lem}\label{cor0}
  If $\tensor{P}$ satisfies Eq.(1), then its right eigenvectors with eigenvalue $\lambda$ span the orthogonal complement of $\mu$.
\end{lem}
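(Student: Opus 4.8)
The plan is to exploit the rank-one-perturbation form of $\tensor{P}$ supplied by Eq.(\ref{PP}) directly, rather than re-running the diagonalizability argument of Lemma~\ref{lem1}. Writing $\tensor{P}^{\infty}=\vec{e}\mu'$, Eq.(\ref{PP}) becomes $\tensor{P}=\lambda\tensor{I}+(1-\lambda)\vec{e}\mu'$. Hence for an arbitrary column vector $v\in\Rnum^3$ one computes
\[
\tensor{P}v=\lambda v+(1-\lambda)\vec{e}\,(\mu'v)=\lambda v+(1-\lambda)(\mu'v)\,\vec{e},
\]
so that $\tensor{P}v=\lambda v$ holds if and only if $(1-\lambda)(\mu'v)\,\vec{e}=0$.

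First I would observe that $\lambda<0$ forces $1-\lambda\neq 0$, and of course $\vec{e}\neq 0$; therefore the condition above is equivalent to $\mu'v=0$. This identifies the right eigenspace of $\tensor{P}$ for the eigenvalue $\lambda$ with the hyperplane $\set{v\in\Rnum^3:\mu'v=0}$, which is exactly the orthogonal complement of $\mu$ in the standard inner product. Since this hyperplane is two-dimensional, the conclusion is also consistent with $\lambda$ being a double eigenvalue and with $\tensor{P}$ being diagonalizable as in Lemma~\ref{lem1}.

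There is essentially no obstacle: the statement reduces to a one-line computation once Eq.(\ref{PP}) is written in the form $\lambda\tensor{I}+(1-\lambda)\vec{e}\mu'$. The only point meriting a word of care is the nondegeneracy used to pass from $(1-\lambda)(\mu'v)\vec{e}=0$ to $\mu'v=0$, namely $\lambda\neq 1$, which is automatic since $\lambda<0$; note that positivity of $\mu$ (guaranteed by Corollary~\ref{cor2}) is not needed for this eigenvector characterization.
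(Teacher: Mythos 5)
Your proposal is correct and follows essentially the same computation as the paper's proof: expand $\tensor{P}v$ using Eq.~(\ref{PP}) in the form $\lambda\tensor{I}+(1-\lambda)\vec{e}\mu'$ and use $\lambda\neq 1$ to conclude $\mu'v=0$. If anything, you are slightly more thorough, since you also verify the converse inclusion (every $v$ with $\mu'v=0$ is an eigenvector), which the paper leaves implicit when it asserts that the eigenvectors span the orthogonal complement of $\mu$.
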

\begin{proof}
  If $f$ is a right eigenvector of $\tensor{P}$ with eigenvalue $\lambda$ then
\begin{equation}
  \lambda f= \tensor{P} f = \vec{e} \mu' f +\lambda (\tensor{I}-\vec{e} \mu') f =(1-\lambda)\vec{e} \mu' f + \lambda f,
\end{equation}
so that $\mu' f=0$. Hence the eigenvectors span the orthogonal complement of $\mu$.
\end{proof}
\noindent{\it Proof of Proposition~\ref{prop3}.\,}
The necessity.  Since $\lambda<0$ and $\tensor{P}=e^{h\tensor{Q}}$, $\tensor{Q}$ has complex eigenvalues $\theta$ and $\bar{\theta}$ such that $e^{\theta h}=\lambda$. It follows from Lemma~\ref{lem0} that $\tensor{Q}$ has $\mu' \tensor{Q}=0$.

The sufficiency. Since $\tensor{P}$ satisfies Eq.(\ref{PP}), one obtains that
\begin{equation}\label{f-mat}
  \tensor{P}=\tensor{F}\,\diag\set{1,\,\lambda,\,\lambda}\tensor{F}^{-1},
\end{equation}
where $\tensor{F}=[\vec{e},\,\varphi_1,\,\varphi_2]$, $\varphi_1,\,\varphi_2$ are any two linear independent vectors in the orthogonal complement of $\mu$ by Lemma~\ref{cor0}.

Denote by $f+\mi g$ the eigenvector of $\tensor{Q}$ with eigenvalue $\theta=p+\mi q,\,q\neq 0$. Clearly
$f,\,g$ are linear independent. $\mu'\tensor{Q}=0$ implies that $f,\,g$ span the orthogonal complement of $\mu$.

Let $\tensor{P}_h=e^{h\tensor{Q}}$. Hence $e^{h\tensor{Q}}=\sum_{n=0}^{\infty}\frac{h^n}{n!}\tensor{Q}^n$ implies
that if $\mu'\tensor{Q}=0 $ then $\mu'\tensor{P}_h=\mu'$ and that
\begin{equation}
  \tensor{P}_h(f+\mi g)=e^{\theta h}(f+\mi g)={\lambda}(f+\mi g).
\end{equation}
Hence $\tensor{P}_h f={\lambda}f,\, \tensor{P}_h g={\lambda}g$. Any transition rate matrix $\tensor{Q}$ has $\tensor{Q}\vec{e}=0$, so that
$\tensor{P}_h \vec{e}=\vec{e}$.
Thus one obtains that
\begin{equation}\label{f2-mat}
  \tensor{P}_h=[\vec{e},\,f,\,g]\,\diag\set{1,\,\lambda,\,\lambda}[\vec{e},\,f,\,g]^{-1}.
\end{equation}

Note that in Eq.(\ref{f-mat}), one can choose that the matrix $\tensor{F}=[\vec{e},\,f,\,g]$, which implies that $\tensor{P}_h=\tensor{P}$.
{\hfill\large{$\Box$}}

\begin{lem}\label{lemchen}
  Suppose that $f(x)=x+\frac{a}{x}$ with $0<x\leqslant c$, where $a,\,c>0$ are two constants. Then the minimum of $f(x)$ is
\begin{equation}\label{minm}
  \left\{
      \begin{array}{ll}
      2\sqrt{a}, &\mbox{\,when \,} \sqrt{a}\leqslant c ,\\
      c+\frac{a}{c}, &\mbox{\, when\, } \sqrt{a}> c.
      \end{array}
\right.
\end{equation}
The minimum is attained at the point
\begin{equation}\label{min1}
  \left\{
      \begin{array}{ll}
      x=\sqrt{a}, &\mbox{\, when\,  } \sqrt{a}\leqslant c ,\\
      x=c,  &\mbox{\, when\, } \sqrt{a}> c.
      \end{array}
\right.
\end{equation}
\end{lem}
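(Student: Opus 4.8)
The plan is to treat this as a standard single-variable optimization on the half-open interval $(0,c]$, using the first derivative to locate the unique interior critical point and then splitting into two cases according to whether that critical point lies inside the interval.

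First I would compute $f'(x)=1-\frac{a}{x^2}=\frac{x^2-a}{x^2}$ for $x>0$. Since $a>0$, the numerator vanishes exactly at $x=\sqrt{a}$ (the positive root), is negative for $0<x<\sqrt{a}$, and positive for $x>\sqrt{a}$. Thus $f$ is strictly decreasing on $(0,\sqrt{a}]$ and strictly increasing on $[\sqrt{a},\infty)$, so $x=\sqrt{a}$ is the unique global minimizer of $f$ on $(0,\infty)$, with value $f(\sqrt{a})=\sqrt{a}+\frac{a}{\sqrt{a}}=2\sqrt{a}$. Equivalently, this bound follows at once from the AM--GM inequality $x+\frac{a}{x}\geqslant 2\sqrt{a}$, with equality if and only if $x=\sqrt{a}$.

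Next I would distinguish the two cases. When $\sqrt{a}\leqslant c$, the global minimizer $\sqrt{a}$ lies in $(0,c]$, so the constrained minimum coincides with the unconstrained one: it is attained at $x=\sqrt{a}$ and equals $2\sqrt{a}$. When $\sqrt{a}>c$, the critical point lies to the right of the interval; since $f$ is strictly decreasing throughout $(0,\sqrt{a}]$ and hence throughout $(0,c]$, the minimum on $(0,c]$ is attained at the right endpoint $x=c$, with value $f(c)=c+\frac{a}{c}$. This exhausts both cases and matches Eq.(\ref{minm}) and Eq.(\ref{min1}).

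There is no genuine obstacle here, as the argument is routine; the one point requiring care is to confirm that in the second case the right endpoint is actually the minimizer rather than merely a boundary value. This is exactly where the strict monotonicity of $f$ on $(0,c]$, which follows from $f'<0$ there, is used, so I would state that monotonicity explicitly before concluding.
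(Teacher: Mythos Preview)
Your proof is correct and is exactly the standard calculus argument one would expect; the paper itself simply writes ``It is trivial'' for this lemma, so your expansion is entirely in line with the intended approach.
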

\begin{proof}
  It is trivial.
\end{proof}
Denote that
\begin{eqnarray}
  \mathcal{D}=\set{(x_1,x_2,x_3)\in \Rnum^3:\, x_1,\,x_2,\,x_3\geqslant0, x_1+x_2,\,x_2+x_3,\,x_3+x_1 >0},\\
  \mathcal{E}_1=\mathcal{D} \cap \set{(x_1,x_2,x_3)\in \Rnum^3:\, x_1\leqslant x_2\leqslant x_3}.
\end{eqnarray}
 Since $F(x_1,x_2,x_3)=F(rx_1,rx_2,rx_3),\,\forall r>0$, the
 existence of the maximum of $F(x_1,x_2,x_3)$ on $\mathcal{D}$ is equal to the existence on the unit sphere which
 holds true since the unit sphere is compact.\footnote{The points $(1,0,0),\,(0,1,0),\,(0,0,1)$ are not in $\mathcal{D}$,
 but the values of $F(x_1,x_2,x_3)$ are zero at these points, which do not alter the maximum of $F(x_1,x_2,x_3)$.}
\begin{lem}\label{lem11}
  Restricted on $\mathcal{E}_1$, the maximum of $F(x_1,x_2,x_3)$ is
\begin{equation}\label{maximum1}
  \left\{
      \begin{array}{ll}
      \frac{\mu_1\mu_2\mu_3}{4(1-\mu_1 )}, &\mbox{\,when \,} \frac{1}{\mu_1}\geqslant \frac{1}{\mu_2}+\frac{1}{\mu_3} ,\\
      \frac{1}{(\frac{1}{\mu_1}+\frac{1}{\mu_2}+\frac{1}{\mu_3})^2}, &\mbox{\, when\, } \frac{1}{\mu_1}<\frac{1}{\mu_2}+\frac{1}{\mu_3}.
      \end{array}
\right.
\end{equation}
The maximum is attained at the points
\begin{equation}\label{maximumpoint1}
  \left\{
      \begin{array}{ll}
      x_1=x_2=\frac{\frac{1}{\mu_2}+\frac{1}{\mu_3}}{\frac{2}{\mu_1}-\frac{1}{\mu_2}-\frac{1}{\mu_3}} x_3, &\mbox{\, when\,  } \frac{1}{\mu_1}\geqslant\frac{1}{\mu_2}+\frac{1}{\mu_3} ,\\
      x_1=x_2=x_3,  &\mbox{\, when\, } \frac{1}{\mu_1}<\frac{1}{\mu_2}+\frac{1}{\mu_3}.
      \end{array}
\right.
\end{equation}
\end{lem}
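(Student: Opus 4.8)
The plan is to reduce the problem on all of $\mathcal{D}$ to the region $\mathcal{E}_1$ where $x_1\le x_2\le x_3$, so that $\min\{x_1,x_2,x_3\}=x_1$ and the numerator of $F$ becomes the genuine polynomial $x_1x_2+x_2x_3+x_3x_1+x_1^2$. Since $F$ is symmetric in the obvious sense — permuting $(x_1,x_2,x_3)$ together with $(\mu_1,\mu_2,\mu_3)$ leaves $F$ invariant — the maximum over $\mathcal{D}$ equals the largest of the three values obtained by applying Lemma~\ref{lem11} after relabeling so that each of $\mu_1,\mu_2,\mu_3$ in turn plays the role of the ``$\mu_1$'' slot (equivalently, so that the correspondingly relabeled $x$-variable is the smallest). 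This is exactly the case split in Eq.(\ref{maximum}): if some $\frac{1}{\mu_i}\ge\frac{1}{\mu_j}+\frac{1}{\mu_k}$ then the triangle inequality fails and the relevant branch of Lemma~\ref{lem11} gives $\frac{\mu_1\mu_2\mu_3}{4(1-\mu_i)}=\frac{\mu_1\mu_2\mu_3}{4(1-m)}$ (noting $m=\mu_i$ since the $\mu$ with largest reciprocal is the smallest); otherwise all three relabelings land in the second branch and give the common value $\frac{1}{(1/\mu_1+1/\mu_2+1/\mu_3)^2}$, which coincides with Eq.(\ref{eq2})'s expression after clearing denominators.

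So the real content is Lemma~\ref{lem11}, and I would prove it by the following steps. First, by homogeneity normalize on $\mathcal{E}_1$, say by fixing $x_3=1$, leaving $0\le x_1\le x_2\le 1$; the numerator is $N=x_1x_2+x_2+x_1+x_1^2$ and the denominator is $\bigl(\tfrac{x_1+x_2}{\mu_1}+\tfrac{x_2+1}{\mu_2}+\tfrac{1+x_1}{\mu_3}\bigr)^2$. Second, I would argue that at an interior maximum the gradient conditions force a relation among $x_1,x_2$; the cleanest route is to observe that $F$ is a ratio of a quadratic form to the square of a linear form, so fixing the linear form's value reduces the problem to maximizing the quadratic $N$ on a line segment — $N$ is itself a simple quadratic in the $x_i$, and one checks that along the constraint surface it is maximized either at a stationary point (giving the proportionality relation $x_1=x_2=\frac{1/\mu_2+1/\mu_3}{2/\mu_1-1/\mu_2-1/\mu_3}\,x_3$ claimed in Eq.(\ref{maximumpoint1})) or on the boundary $x_1=x_2$ or $x_1=0$. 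Third, on the boundary pieces I would use Lemma~\ref{lemchen}: setting $x_1=x_2=t x_3$ reduces $F$ to an expression of the form $\bigl(c_1 t + c_2 + c_3/ \text{(something)}\bigr)$-type after a substitution, more precisely to something handled by the $x+a/x$ lemma, whose two-case minimum (according to whether the unconstrained optimum $\sqrt a$ respects the constraint $x\le c$) is precisely what produces the dichotomy ``$\frac{1}{\mu_1}\gtrless\frac{1}{\mu_2}+\frac{1}{\mu_3}$'' in Eq.(\ref{maximum1}): when $\frac{1}{\mu_1}<\frac{1}{\mu_2}+\frac{1}{\mu_3}$ the stationary point falls outside $\mathcal{E}_1$ (it would need $x_1=x_2>x_3$), so the maximum slides to the corner $x_1=x_2=x_3$, giving $\frac{1}{(1/\mu_1+1/\mu_2+1/\mu_3)^2}$; when $\frac{1}{\mu_1}\ge\frac{1}{\mu_2}+\frac{1}{\mu_3}$ the stationary point is admissible and substituting it back yields $\frac{\mu_1\mu_2\mu_3}{4(1-\mu_1)}$ after simplification (the identity $\mu_1+\mu_2+\mu_3=1$ being used to rewrite $\mu_2\mu_3(\mu_2+\mu_3)$-type terms). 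Finally I would verify that the $x_1=0$ edge and the remaining boundary never beat these values, closing the case analysis.

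The main obstacle I anticipate is the interior-critical-point computation in Lemma~\ref{lem11}: because $\min\{x_1,x_2,x_3\}$ appears in the numerator, $F$ is only piecewise smooth, and one must be careful that the maximum over $\mathcal{E}_1$ is attained where $x_1$ is indeed the smallest — i.e., that the proportionality constant $\frac{1/\mu_2+1/\mu_3}{2/\mu_1-1/\mu_2-1/\mu_3}$ is $\le 1$ exactly when $\frac{1}{\mu_1}\ge\frac{1}{\mu_2}+\frac{1}{\mu_3}$ (which it is, since that inequality is equivalent to the constant being $\le 1$), so that the constrained optimum does not leak out of $\mathcal{E}_1$; and that the denominator term inside $\sqrt{\cdot}$ in $H$ stays positive throughout, as flagged in the footnote to Remark~\ref{prop4} and addressed in the final Remark. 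The algebraic simplifications reducing the substituted critical value to $\frac{\mu_1\mu_2\mu_3}{4(1-\mu_1)}$ are routine but bulky; I would organize them by clearing all $\mu_i$-denominators first and using $\sum\mu_i=1$ only at the very end. Once Lemma~\ref{lem11} is in hand, the passage to Proposition~\ref{lem5} via the symmetry/relabeling argument is short, and Theorem~\ref{main} then follows immediately by combining Proposition~\ref{prop3}, Remark~\ref{prop4} (continuity of $H$ plus the intermediate value theorem, using $H=0$ is attained), and the identification $\max H = \sqrt{4\max F - 1}$.
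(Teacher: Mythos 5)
Your plan identifies the right answer and the right one-dimensional tool (Lemma~\ref{lemchen} on the face $x_1=x_2$), but the step that carries the whole weight of Lemma~\ref{lem11} is not actually proved: namely, that the maximum over $\mathcal{E}_1$ is attained with $x_1=x_2$. You propose to get this by fixing the value of the linear form and maximizing the numerator $N=x_1x_2+x_2x_3+x_3x_1+x_1^2$ on the resulting slice, asserting that ``one checks'' the maximum sits either at a stationary point giving $x_1=x_2=\frac{1/\mu_2+1/\mu_3}{2/\mu_1-1/\mu_2-1/\mu_3}x_3$ or on the edges $x_1=x_2$, $x_1=0$. But $N$ has indefinite Hessian (eigenvalues $3,0,-1$), so on a level set of the linear form it need not be concave, and none of the required case analysis (interior behaviour, the edge $x_1=0$, the edge $x_2=x_3$, and the ``routine but bulky'' algebra producing $\frac{\mu_1\mu_2\mu_3}{4(1-\mu_1)}$) is carried out --- these checks \emph{are} the content of the lemma. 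Moreover your description is internally inconsistent: the candidate point you call an interior stationary point satisfies $x_1=x_2$, i.e.\ it lies on the boundary face of $\mathcal{E}_1$, and in fact the (smoothly extended) objective has no interior critical point there, which is exactly why a boundary-reduction argument is needed rather than assumed. You also do not address the degenerate subcase $\frac{2}{\mu_1}\leqslant\frac{1}{\mu_2}+\frac{1}{\mu_3}$, where your proportionality constant is undefined or negative.

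The paper avoids all of this with one observation you missed: on $\mathcal{E}_1$ the numerator factors, $x_1x_2+x_2x_3+x_3x_1+x_1^2=(x_1+x_2)(x_3+x_1)$. Passing to the pair sums $r=x_1+x_2$, $s=x_2+x_3$, $t=x_3+x_1$ (so $0<r\leqslant t\leqslant s$) turns $F$ into $G(r,s,t)=rt\big/\big(\frac{r}{\mu_1}+\frac{s}{\mu_2}+\frac{t}{\mu_3}\big)^2$, which is manifestly decreasing in $s$; hence the maximum has $s=t$, i.e.\ $x_1=x_2$, with no critical-point analysis at all. Then the substitution $w=\sqrt{r/t}\in(0,1]$ reduces $G(r,t,t)$ to $\big[\mu_1/(w+\mu_1(\frac{1}{\mu_2}+\frac{1}{\mu_3})\frac{1}{w})\big]^2$, and Lemma~\ref{lemchen} with $a=\mu_1(\frac{1}{\mu_2}+\frac{1}{\mu_3})$, $c=1$ yields both branches of (\ref{maximum1}) and the maximizers (\ref{maximumpoint1}) directly (using $\mu_2+\mu_3=1-\mu_1$). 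To make your write-up a proof you would either have to supply the full constrained-optimization analysis you deferred, or adopt this factorization/monotonicity reduction, which disposes of the multivariable step in two lines. (Your opening paragraph about permuting indices concerns Proposition~\ref{lem5}, not the lemma itself, so it does not fill the gap.)
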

\begin{proof}
  Restricted on $\mathcal{E}_1$, i.e., $x_1\leqslant x_2\leqslant x_3$, we obtain that
\begin{equation}
  F(x_1,x_2,x_3)=\frac{x_1x_2+x_2x_3+x_3x_1+x_1^2}{(\frac{x_1+x_2}{\mu_1}+
\frac{x_2+x_3}{\mu_2}+\frac{x_3+x_1}{\mu_3})^2}=\frac{(x_1+x_2)(x_3+x_1)}{(\frac{x_1+x_2}{\mu_1}+
\frac{x_2+x_3}{\mu_2}+\frac{x_3+x_1}{\mu_3})^2}.
\end{equation}
Let $r=x_1+x_2,\,s=x_2+x_3,\,t=x_3+x_1$. Then $0<r\leqslant t\leqslant s,$ and
\begin{equation}
  F(x_1,x_2,x_3)=\frac{rt}{(\frac{r}{\mu_1}+
\frac{s}{\mu_2}+\frac{t}{\mu_3})^2}\triangleq G(r,s,t)\leqslant G(r,t,t).
\end{equation}
That is to say, $G(r,s,t)$ attains its maximum when $s=t$ since it is a decreasing function of $s$ .
Let $w=\sqrt{\frac{r}{t}}$. Then $0<w\leqslant 1$ and we have that
\begin{eqnarray}
  G(r,t,t)&=&\frac{rt}{[\frac{1}{\mu_1}r+
(\frac{1}{\mu_2}+\frac{1}{\mu_3})t]^2}\nonumber \\
&=&\big[\,\frac{\mu_1}{w+\mu_1(\frac{1}{\mu_2}+\frac{1}{\mu_3})\frac{1}{w}}\,\big]^2 \triangleq L(w).
\end{eqnarray}
It follows from Lemma~\ref{lemchen} that when $\mu_1(\frac{1}{\mu_2}+\frac{1}{\mu_3})>1$, the maximum of $L(w)$ is
$\frac{1}{(\frac{1}{\mu_1}+\frac{1}{\mu_2}+\frac{1}{\mu_3})^2}$, and when $\mu_1(\frac{1}{\mu_2}+\frac{1}{\mu_3})\leqslant1$, the maximum of $L(w)$ is $\frac{\mu_1}{4(\frac{1}{\mu_2}+\frac{1}{\mu_3})}$.
In addition, the direct computation yields the maximum points (\ref{maximumpoint1}) from Lemma~\ref{lemchen}.
\end{proof}

\noindent{\it Proof of Proposition~\ref{lem5}.\,}
In order to prove Proposition 2.5 we first assume that $\mu_i$ satisfies the
triangular inequality. There is no loss of generality in assuming that $x_1 \leqslant x_2 \leqslant x_3$ because if this is not the case, the arguments can be permuted to satisfy the restriction. This will leave the value of $F$ invariant if also $\mu_i$ are
permuted correspondingly, but the triangular inequality condition is invariant
to permutations of $\mu_i$. Thus from Lemma~\ref{lem11} we get that $F$ is bounded by
the restricted maximum:
\begin{equation}
  F(x_1, x_2, x_3)\leqslant \frac{1}{(\frac{1}{\mu_1}+\frac{1}{\mu_2}+\frac{1}{\mu_3})^2}.
\end{equation}
If, however, $\mu_i$ does not satisfy the triangular inequality, then, without
loss of generality we can assume that
\begin{equation}
  \frac{1}{\mu_1}\geqslant \frac{1}{\mu_2}+\frac{1}{\mu_3}.
\end{equation}
Now on the set $\mathcal{E}_1$ we can apply Lemma~\ref{lem11} and find that
the function is bounded by the unrestricted maximum
\begin{equation}
  F(x_1, x_2, x_3)\leqslant \frac{\mu_1\mu_2\mu_3}{4(1-\mu_1 )}.
\end{equation}
Now when $\frac{1}{\mu_1}\geqslant \frac{1}{\mu_2}+\frac{1}{\mu_3}$, we have $\mu_1\leqslant \frac{\mu_2\mu_3}{\mu_2+\mu_3}\leqslant\min(\mu_2,\,\mu_3)$, so that $\mu_1=m=\min(\mu_1,\,\mu_2,\,\mu_3)$ and
\begin{equation}
  F(x_1, x_2, x_3)\leqslant \frac{\mu_1\mu_2\mu_3}{4(1-\mu_1 )}\leqslant\frac{\mu_1\mu_2\mu_3}{4(1-m )}.
\end{equation}

{\hfill\large{$\Box$}}

\section{The imbedding problem for 3-order transition matrix with positive eigenvalues or complex eigenvalues}

\begin{prop}
Suppose $\tensor{P}$ be an indecomposable $3\times 3$ transition probability matrix with eigenvalues $\set{1,\,\lambda_1,\,\lambda_2}$. Then it satisfies that
\begin{equation}\label{eigen2}
         \tensor{P}^2-(\lambda_1+\lambda_2)\tensor{P}+\lambda_1\lambda_2\tensor{I}=(\lambda_1-1)(\lambda_2-1)\tensor{P}^{\infty}.
\end{equation}
\end{prop}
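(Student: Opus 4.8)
The plan is to verify the identity by exhibiting a suitable diagonalization or, more robustly, by using the Cayley–Hamilton theorem together with the structure of the stationary/limiting matrix. First I would recall that an indecomposable stochastic matrix $\tensor{P}$ with simple eigenvalue $1$ has the decomposition of its spectral projection onto the eigenvalue $1$ equal to $\tensor{P}^{\infty}=\vec{e}\mu'$, where $\mu'$ is the unique stationary distribution; this is the Perron projection and satisfies $\tensor{P}\tensor{P}^{\infty}=\tensor{P}^{\infty}\tensor{P}=\tensor{P}^{\infty}$ and $(\tensor{P}^{\infty})^2=\tensor{P}^{\infty}$.

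The cleanest route assumes $\tensor{P}$ is diagonalizable (the generic case, and the case relevant to the paper): write $\tensor{P}=\tensor{P}^{\infty}+\lambda_1 E_1+\lambda_2 E_2$ where $E_1,E_2$ are the spectral projections for $\lambda_1,\lambda_2$, with $\tensor{P}^{\infty}+E_1+E_2=\tensor{I}$ and $E_iE_j=0$ for $i\neq j$, $E_i^2=E_i$, and $\tensor{P}^{\infty}E_i=0$. Then one computes directly
\begin{equation*}
\tensor{P}^2-(\lambda_1+\lambda_2)\tensor{P}+\lambda_1\lambda_2\tensor{I}
= (1-\lambda_1)(1-\lambda_2)\,\tensor{P}^{\infty} + 0\cdot E_1 + 0\cdot E_2,
\end{equation*}
since the polynomial $t^2-(\lambda_1+\lambda_2)t+\lambda_1\lambda_2=(t-\lambda_1)(t-\lambda_2)$ annihilates the $\lambda_1$- and $\lambda_2$-blocks and takes the value $(1-\lambda_1)(1-\lambda_2)$ on the eigenvalue $1$. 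Noting $(1-\lambda_1)(1-\lambda_2)=(\lambda_1-1)(\lambda_2-1)$ gives exactly Eq.(\ref{eigen2}).

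To avoid assuming diagonalizability, I would instead argue as follows: the characteristic polynomial of $\tensor{P}$ is $(t-1)(t-\lambda_1)(t-\lambda_2)$, so by Cayley–Hamilton $(\tensor{P}-\tensor{I})(\tensor{P}-\lambda_1\tensor{I})(\tensor{P}-\lambda_2\tensor{I})=0$; hence the matrix $\tensor{M}:=(\tensor{P}-\lambda_1\tensor{I})(\tensor{P}-\lambda_2\tensor{I})=\tensor{P}^2-(\lambda_1+\lambda_2)\tensor{P}+\lambda_1\lambda_2\tensor{I}$ satisfies $(\tensor{P}-\tensor{I})\tensor{M}=0$, so every column of $\tensor{M}$ lies in the $1$-eigenspace of $\tensor{P}$, which is spanned by $\vec{e}$ (indecomposability makes $1$ simple). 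Thus $\tensor{M}=\vec{e}\,v'$ for some row vector $v'$. Transposing and using the same reasoning on $\tensor{P}'$, whose $1$-eigenvector is $\mu$, forces $v'$ to be a multiple of $\mu'$, so $\tensor{M}=c\,\vec{e}\mu'=c\,\tensor{P}^{\infty}$. Finally I would pin down the scalar $c$ by applying $\tensor{M}$ to $\vec{e}$: since $\tensor{P}\vec{e}=\vec{e}$ we get $\tensor{M}\vec{e}=(1-\lambda_1)(1-\lambda_2)\vec{e}$, while $c\,\tensor{P}^{\infty}\vec{e}=c\,\vec{e}$, giving $c=(1-\lambda_1)(1-\lambda_2)=(\lambda_1-1)(\lambda_2-1)$.

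The only place requiring care — the "main obstacle," though it is mild — is justifying that $\tensor{M}$ has rank at most one and that its row space is spanned by $\mu'$; this uses that $\tensor{P}$ is indecomposable so that the eigenvalue $1$ is algebraically (not merely geometrically) simple, whence both the column space constraint ($\subseteq \mathrm{span}\{\vec{e}\}$) and the row space constraint ($\subseteq \mathrm{span}\{\mu'\}$) hold simultaneously. If $\lambda_1=\lambda_2$ or if there were a nontrivial Jordan block at $1$ one would need the minimal-polynomial refinement, but indecomposability rules the latter out, and the argument above never actually needed $\lambda_1\neq\lambda_2$. I would present the Cayley–Hamilton version as the main proof since it is self-contained and sidesteps any diagonalizability hypothesis.
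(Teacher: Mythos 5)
Your proof is correct, and it is worth noting that the paper itself offers no proof of this proposition at all: it simply remarks that the identity is Eq.~(1.15) of Johansen's paper and moves on. Your Cayley--Hamilton argument therefore supplies a self-contained derivation that the paper leaves to the literature, and it is sound: the characteristic polynomial is $(t-1)(t-\lambda_1)(t-\lambda_2)$, so $\tensor{M}=\tensor{P}^2-(\lambda_1+\lambda_2)\tensor{P}+\lambda_1\lambda_2\tensor{I}$ satisfies $(\tensor{P}-\tensor{I})\tensor{M}=\tensor{M}(\tensor{P}-\tensor{I})=0$ (the two factors commute, being polynomials in $\tensor{P}$), and the only point needing care is the one you flag, namely that indecomposability makes the eigenvalue $1$ simple, so the right $1$-eigenspace is spanned by $\vec{e}$ and the left $1$-eigenspace by $\mu'$; this forces $\tensor{M}=c\,\vec{e}\mu'=c\,\tensor{P}^{\infty}$, and evaluating on $\vec{e}$ gives $c=(1-\lambda_1)(1-\lambda_2)$. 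This is exactly the justification the paper implicitly relies on (its Lemma~2.1 already uses uniqueness of the stationary vector in the same way), and your version has the advantage of not assuming diagonalizability or $\lambda_1\neq\lambda_2$, whereas your first sketch via spectral projections would need $\tensor{P}$ diagonalizable, which is not part of the hypothesis. Presenting the Cayley--Hamilton route as the main proof, as you propose, is the right choice.
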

It is exactly the Eq.(1.15) of \cite{jhson}.
Johansen S. points out that the condition of the imbedding problem can be given in terms of $\tensor{P}$ and $\tensor{P}^{\infty}$ (or $\mu$) in \cite{jhson}. The following two propositions are the direct corollary of Proposition~1.2, Proposition~1.4 of \cite{jhson} and Eq.(\ref{eigen2}).
\begin{prop}
  Let $\tensor{P}$ be an indecomposable $3\times 3$ transition probability matrix with positive eigenvalues $\set{1,\,\lambda_1,\,\lambda_2}$. $\tensor{P}$ can be imbedded if and only if
    \begin{equation}\label{positive}
      {p_{ij}} \geqslant {\mu_j} \frac{(\lambda_2-1)\log \lambda_1-(\lambda_1-1)\log \lambda_2}{\log \lambda_2-\log \lambda_1},\quad i\neq j.
    \end{equation}
 If $\lambda_1=\lambda_2=\lambda$, then the right hand side of (\ref{positive}) is in the sense of limit, i.e., fix the value of $\lambda_1$, and let $\lambda_2\rightarrow \lambda_1$, one has that
    \begin{equation}
      {p_{ij}} \geqslant {\mu_j}( \lambda\log \lambda-\lambda +1 ),\quad i\neq j.
    \end{equation}
 \end{prop}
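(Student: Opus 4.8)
The plan is to derive both displayed inequalities as a direct translation of the eigenvalue identity~\eqref{eigen2} combined with the known characterization of embeddability from Johansen. First I would invoke Proposition~1.2 and Proposition~1.4 of~\cite{jhson}, which (for distinct positive eigenvalues) state that $\tensor{P}$ is embeddable if and only if the unique real logarithm $\tensor{Q}=\log\tensor{P}$ is a genuine transition rate matrix, i.e.\ its off-diagonal entries are nonnegative. Since the eigenvalues $1,\lambda_1,\lambda_2$ are distinct and positive, the principal logarithm is the only candidate, so the condition reduces to $q_{ij}\geqslant 0$ for $i\neq j$, where $\tensor{Q}=\log\tensor{P}$.

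Next I would compute $\log\tensor{P}$ explicitly using the spectral decomposition. Because $\tensor{P}$ is diagonalizable with $\tensor{P}=\tensor{P}^{\infty}+\lambda_1 E_1+\lambda_2 E_2$ for the spectral projections $E_1,E_2$ onto the $\lambda_1,\lambda_2$ eigenspaces, we have $\log\tensor{P}=(\log\lambda_1)E_1+(\log\lambda_2)E_2$ (the $\mu'$-eigenvalue contributes $0$). The key algebraic step is to express $E_1$ and $E_2$ in terms of $\tensor{I}$, $\tensor{P}$, and $\tensor{P}^{\infty}$: from $\tensor{P}-\tensor{P}^{\infty}=\lambda_1E_1+\lambda_2E_2$ and $(\tensor{P}-\tensor{P}^{\infty})^2=\lambda_1^2E_1+\lambda_2^2E_2$ (equivalently, reading off~\eqref{eigen2}), one solves the $2\times2$ linear system for $E_1,E_2$. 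Substituting back gives $\tensor{Q}=\log\tensor{P}$ as an explicit affine combination of $\tensor{I}$, $\tensor{P}$, $\tensor{P}^{\infty}$ with coefficients that are rational functions of $\log\lambda_1,\log\lambda_2,\lambda_1,\lambda_2$. Reading off the $(i,j)$ off-diagonal entry, using $p^{\infty}_{ij}=\mu_j$ and that the $\tensor{I}$ term does not contribute off-diagonal, yields precisely $q_{ij}=c_1 p_{ij}+c_2\mu_j$ for suitable constants; imposing $q_{ij}\geqslant0$ and simplifying gives~\eqref{positive}. The coinciding-eigenvalue case follows by taking $\lambda_2\to\lambda_1$ in~\eqref{positive}: the right-hand side is a $0/0$ limit, and L'Hôpital (differentiating numerator and denominator in $\lambda_2$, then setting $\lambda_2=\lambda_1=\lambda$) collapses it to $\mu_j(\lambda\log\lambda-\lambda+1)$; alternatively one uses the confluent spectral decomposition $\tensor{P}=\tensor{P}^{\infty}+\lambda(\tensor{I}-\tensor{P}^{\infty})$ with a Jordan-block-free structure (guaranteed by diagonalizability as in Lemma~\ref{lem1}) so that $\log\tensor{P}=\log\lambda\cdot(\tensor{I}-\tensor{P}^{\infty})+0\cdot\tensor{P}^{\infty}$ directly.

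The main obstacle I expect is bookkeeping: correctly solving the $2\times2$ system for the spectral projectors and tracking the signs so that the inequality comes out in the stated form with $\log\lambda_2-\log\lambda_1$ in the denominator (one must check this quantity's sign matches the sign of the numerator so the direction of the inequality is right, and confirm the expression is symmetric under swapping $(\lambda_1,j)\leftrightarrow(\lambda_2,j)$ in the appropriate sense). A secondary subtlety is justifying that the principal logarithm is really the \emph{only} transition rate matrix whose exponential is $\tensor{P}$: here one appeals to the fact that any other logarithm differs by adding $2\pi\mathrm{i}k$ times a spectral projector, which for distinct positive real eigenvalues forces complex entries unless $k=0$, so no alternative real generator exists. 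Once these two points are nailed down, the rest is the routine substitution $p^{\infty}_{ij}=\mu_j$ and rearrangement.
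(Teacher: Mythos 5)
Your treatment of the distinct-eigenvalue case is correct and is essentially the paper's route: the paper simply declares the proposition a direct corollary of Johansen's Propositions 1.2 and 1.4 together with Eq.~(\ref{eigen2}), and what you do is fill in that computation --- write the principal logarithm as an affine combination of $\tensor{I}$, $\tensor{P}$, $\tensor{P}^{\infty}$ via the spectral projectors, argue that for distinct positive eigenvalues no other real logarithm (hence no other generator) exists, and read off $q_{ij}\geqslant 0$ as~(\ref{positive}). The sign bookkeeping works out: $q_{ij}=\frac{\log\lambda_1-\log\lambda_2}{\lambda_1-\lambda_2}(p_{ij}-\mu_j)-\frac{\lambda_1\log\lambda_2-\lambda_2\log\lambda_1}{\lambda_1-\lambda_2}\mu_j$ with positive leading coefficient, which rearranges exactly to the stated inequality.

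The genuine gap is the coinciding case $\lambda_1=\lambda_2=\lambda$. Neither of your two suggestions proves it. The L'H\^opital computation only identifies the limiting form of the bound; embeddability criteria do not automatically pass to the limit, and the second display of the proposition is a statement about matrices whose eigenvalues actually coincide, which must be proved for that class. Your alternative route assumes $\tensor{P}$ is diagonalizable, ``guaranteed by Lemma~\ref{lem1}'' --- but Lemma~\ref{lem1} concerns a \emph{negative} repeated eigenvalue, where any generator is forced to have a conjugate pair of complex eigenvalues and is therefore diagonalizable. For a repeated \emph{positive} eigenvalue the generator may have a repeated real eigenvalue with a nontrivial Jordan block, so an embeddable $\tensor{P}$ need not satisfy $\tensor{P}=\tensor{P}^{\infty}+\lambda(\tensor{I}-\tensor{P}^{\infty})$. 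For instance, the generator with rows $(-1,1,0)$, $(0,-1,1)$, $(0,0,0)$ is indecomposable with eigenvalues $0,-1,-1$ and a Jordan block; $\tensor{P}=e^{\tensor{Q}}$ is embeddable, non-diagonalizable, with $\lambda=e^{-1}$, and it attains \emph{equality} $p_{13}=1-2e^{-1}=\mu_3(\lambda\log\lambda-\lambda+1)$. These non-diagonalizable matrices are precisely where the inequality has content: for a diagonalizable $\tensor{P}$ with repeated positive eigenvalue one has $p_{ij}=(1-\lambda)\mu_j$ and the condition collapses to $\lambda\log\lambda\leqslant 0$, which always holds, so your argument only covers a case in which both sides of the equivalence are trivially true. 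To close the gap you must handle the Jordan-block case --- either by redoing the logarithm/uniqueness analysis for a defective $\tensor{P}$ (where the commutant argument for uniqueness of the real logarithm also needs care) or, as the paper does, by invoking Johansen's confluent-case proposition directly and rewriting it with Eq.~(\ref{eigen2}).
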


\begin{prop}
  Let $\tensor{P}$ be an indecomposable $3\times 3$ transition probability matrix with complex eigenvalues $\set{1,\,\lambda_1,\,\lambda_2}$, $\lambda_1=r e^{\mi \theta}\,,\lambda_2=r e^{-\mi \theta},\,\theta\in (0,\,\pi)$. $\tensor{P}$ can be imbedded if and only if
    \begin{equation}
     {p_{ij}} \geqslant{\mu_j}( 1-r\cos \theta +\frac{\sin\theta}{\theta}\,r\log r),\quad i\neq j.
    \end{equation}
or
    \begin{equation}
      {p_{ij}} \leqslant{\mu_j}( 1-r\cos \theta +\frac{\sin\theta}{2\pi-\theta}\,r\log r),\quad i\neq j.
    \end{equation}
 \end{prop}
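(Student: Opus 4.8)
\medskip
\noindent\emph{Proof proposal.}

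The plan is to deduce this from Johansen's characterization (Propositions~1.2 and~1.4 of \cite{jhson}) together with the eigenvalue identity Eq.(\ref{eigen2}); the following self-contained argument also reconstructs that characterization. Since $\tensor{P}$ is indecomposable with the three \emph{distinct} eigenvalues $1,\lambda_1=re^{\mi\theta},\lambda_2=re^{-\mi\theta}$ ($\theta\in(0,\pi)$), it is diagonalizable and $\tensor{P}=\tensor{P}^{\infty}+\lambda_1\tensor{E}_1+\lambda_2\tensor{E}_2$, with spectral idempotents obeying $\tensor{E}_1+\tensor{E}_2=\tensor{I}-\tensor{P}^{\infty}$ and $\tensor{E}_2=\overline{\tensor{E}_1}$. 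Any transition rate matrix $\tensor{Q}$ with $e^{\tensor{Q}}=\tensor{P}$ commutes with $\tensor{P}$, hence lies in its commutant, i.e. $\tensor{Q}=a\tensor{P}^{\infty}+b\tensor{E}_1+\bar b\tensor{E}_2$ with $a$ real; $\tensor{Q}\vec{e}=0$ forces $a=0$ and $e^{\tensor{Q}}=\tensor{P}$ forces $e^{b}=\lambda_1$. So the candidate generators are exactly
\[
\tensor{Q}_k:=(\log r+\mi\phi_k)\,\tensor{E}_1+(\log r-\mi\phi_k)\,\tensor{E}_2,\qquad \phi_k=\theta+2k\pi,\ \ k\in\Znum,
\]
each of which is real, annihilates $\vec{e}$, and satisfies $e^{\tensor{Q}_k}=\tensor{P}$; hence $\tensor{P}$ is embeddable iff $(\tensor{Q}_k)_{ij}\geqslant0$ for all $i\neq j$, for some $k\in\Znum$.

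Next I would make these inequalities explicit. Solving $\tensor{P}=\tensor{P}^{\infty}+\lambda_1\tensor{E}_1+\lambda_2\tensor{E}_2$ together with $\tensor{I}=\tensor{P}^{\infty}+\tensor{E}_1+\tensor{E}_2$ for $\tensor{E}_1,\tensor{E}_2$ (equivalently, using Eq.(\ref{eigen2}) to trade the quadratic term $\tensor{P}^2$ for $\tensor{P}^{\infty}=\vec{e}\mu'$) presents each $\tensor{Q}_k$ as a \emph{real} linear combination of $\tensor{I}$, $\tensor{P}$ and $\tensor{P}^{\infty}$; the imaginary units cancel since $\lambda_1-\lambda_2=2\mi r\sin\theta$ and $(\log r+\mi\phi_k)-(\log r-\mi\phi_k)=2\mi\phi_k$. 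Reading off an off-diagonal entry ($i\neq j$, where $\tensor{I}_{ij}=0$ and $(\tensor{P}^{\infty})_{ij}=\mu_j$) gives
\[
(\tensor{Q}_k)_{ij}=\frac{\phi_k}{r\sin\theta}\,(p_{ij}-\mu_j)-\mu_j\bigl(\log r-\phi_k\cot\theta\bigr);
\]
as $r<1$ the coefficient $\phi_k/(r\sin\theta)$ has the sign of $\phi_k$ (and $\phi_k\neq0$ because $\theta\in(0,\pi)$), so $(\tensor{Q}_k)_{ij}\geqslant0$ for all $i\neq j$ is equivalent to $p_{ij}\geqslant\mu_j\bigl(1-r\cos\theta+\tfrac{\sin\theta}{\phi_k}r\log r\bigr)$ for all $i\neq j$ when $\phi_k>0$, and to the reversed inequality when $\phi_k<0$.

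Finally I would optimize over $k$. Since $\log r<0$, the quantity $\tfrac{\sin\theta}{\phi_k}r\log r$ is strictly increasing in $\phi_k$ on each of $(0,\infty)$ and $(-\infty,0)$; hence among the branches with $\phi_k>0$ the weakest (smallest) lower bound occurs at $\phi_0=\theta$, and among those with $\phi_k<0$ the weakest (largest) upper bound occurs at $\phi_{-1}=\theta-2\pi$. Consequently $\tensor{P}$ is embeddable iff one of these two extremal branches gives a genuine rate matrix, which yields the disjunction of the two displayed inequalities (put $\phi=\theta$, resp. $\phi=\theta-2\pi$, noting $\tfrac{\sin\theta}{\theta-2\pi}=-\tfrac{\sin\theta}{2\pi-\theta}$). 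I expect the main obstacle to be precisely this optimization: one must show that the countably many branch-conditions collapse to the two neighbours of the principal value $\phi_0=\theta$, and must track the reversal of the inequality as $\phi_k$ passes through $0$. The rest is routine: the cancellation of the imaginary units via Eq.(\ref{eigen2}); the fact that $r<1$ here (the only indecomposable $3\times3$ stochastic matrix with complex unit-modulus eigenvalues is the non-embeddable $3$-cycle permutation, which fails both inequalities); and that the borderline case $(\tensor{Q}_k)_{ij}=0$ still gives an admissible rate matrix. \hfill$\Box$
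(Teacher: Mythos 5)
Your reconstruction goes well beyond what the paper does: the paper gives no proof at all for this proposition (it is merely declared a ``direct corollary'' of Propositions~1.2 and~1.4 of \cite{jhson} together with Eq.(\ref{eigen2})), whereas you derive the characterization from scratch. The core of your argument is correct: since the three eigenvalues are distinct, any $\tensor{Q}$ with $e^{\tensor{Q}}=\tensor{P}$ commutes with $\tensor{P}$ and hence is a polynomial in $\tensor{P}$, so the candidate generators are exactly $\tensor{Q}_k=(\log r+\mi\phi_k)\tensor{E}_1+(\log r-\mi\phi_k)\tensor{E}_2$, $\phi_k=\theta+2k\pi$; the off-diagonal formula $(\tensor{Q}_k)_{ij}=\frac{\phi_k}{r\sin\theta}(p_{ij}-\mu_j)+\mu_j(\phi_k\cot\theta-\log r)$ is right; and the monotonicity of $\frac{\sin\theta}{\phi}\,r\log r$ in $\phi$ correctly collapses the countably many branches to $\phi_0=\theta$ and $\phi_{-1}=\theta-2\pi$ (the diagonal entries and row sums are indeed automatic once the off-diagonal entries are nonnegative, since $\tensor{Q}_k\vec{e}=0$).

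The gap is in the very last step: what you derive for $\phi=\theta-2\pi$ is $p_{ij}\leqslant\mu_j\bigl(1-r\cos\theta-\tfrac{\sin\theta}{2\pi-\theta}\,r\log r\bigr)$, which is \emph{not} the second displayed inequality of the statement, whose last term is $+\tfrac{\sin\theta}{2\pi-\theta}\,r\log r$; since $r\log r<0$ the two differ, and the identity $\tfrac{\sin\theta}{\theta-2\pi}=-\tfrac{\sin\theta}{2\pi-\theta}$ that you invoke exhibits the mismatch rather than resolving it. In fact your sign appears to be the correct one: writing $p_{ij}=\mu_j(1-r\cos\theta)-r\sin\theta\,T_{ij}$ for $i\neq j$, where $T=-\mi(\tensor{E}_1-\tensor{E}_2)$ is real with $T\vec{e}=0$ and $\mathrm{tr}\,T=0$, the inequality as printed is equivalent to $T_{ij}\geqslant\mu_j\frac{-\log r}{2\pi-\theta}>0$ for all $i\neq j$, which is impossible because the off-diagonal entries of $T$ sum to zero; so the printed second condition is vacuous for $r<1$, and with it the ``only if'' direction fails for matrices embeddable only through the $\phi=\theta-2\pi$ branch. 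So your argument proves a corrected version of the proposition, not the proposition as printed; you should either flag the sign discrepancy explicitly (as a typo to be fixed in the statement, and check it against Johansen's original Proposition~1.4) or the final claim that your formula ``yields the displayed inequalities'' is simply false for the second one.
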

The two propositions appear implicitly in \cite{jhson} and seem neater than Eq.(1.11-1.13,1.17) of Reference \cite{jhson}.
\begin{rem}
 If $\tensor{P}$ is reversible, then $\frac{p_{ij}}{\mu_j} =\frac{p_{ji}}{\mu_i} $. One should only test half number of the inequalities.
\end{rem}
\section{Conclusion}
In the present paper, we solve the imbedding problem for 3-order transition matrix with coinciding negative eigenvalues by means of an alternate parameterization of the transition rate matrix, which is different from the traditional way to calculate the matrix logarithm or the matrix square root.
\section*{Acknowledgements}  Many thanks to the
anonymous referee for the helpful comments and suggestions
leading to the improvement of the paper. Thanks to Jianmin Chen, one of my undergraduate students, for showing me the proof of the key Lemma~\ref{lem11}.
This work is supported by Hunan Provincial Natural Science Foundation of China (No 10JJ6014).



\end{document}